\title[The growth of Betti numbers]
 {The growth of Betti numbers \\ and approximation theorems}
\author[S. Kionke]{Steffen Kionke}
\address{Steffen Kionke\\
         Mathematisches Institut\\
         Heinrich-Heine-Universit\"at\\
         Universit\"ats\-str.~1\\
         40225 D\"usseldorf\\ Germany}
\email{steffen.kionke@uni-duesseldorf.de}
\subjclass[2010]{Primary 55N10; Secondary 20J06, 57S30}
\keywords{$L^2$-invariants, Betti numbers, approximation theorems}
\theoremstyle{plain}
\newtheorem{theorem}{Theorem}
\newtheorem{lemma}[theorem]{Lemma}
\newtheorem{corollary}[theorem]{Corollary}
\newtheorem{conjecture}[theorem]{Conjecture}
\theoremstyle{definition}
\newtheorem{definition}[theorem]{Definition}
\newtheorem{remark}[theorem]{Remark}
\newtheorem{example}[theorem]{Example}
\newtheorem{exercise}[theorem]{Exercise}
\newtheorem{question}[theorem]{Question}
\numberwithin{equation}{section}
\numberwithin{theorem}{section}
\DeclareMathOperator{\id}{Id}
\DeclareMathOperator{\im}{im}
\DeclareMathOperator{\Tr}{Tr}
\DeclareMathOperator{\Aut}{Aut}
\DeclareMathOperator{\fix}{Fix}
\DeclareMathOperator{\Mat}{Mat}
\DeclareMathOperator{\rnk}{rk}
\DeclareMathOperator{\coker}{coker}
\DeclareMathOperator{\red}{red}
\providecommand{\calH}{\mathcal{H}}
\providecommand{\calN}{\mathcal{N}}
\providecommand{\calM}{\mathcal{M}}
\providecommand{\bbN}{\mathbb{N}}
\providecommand{\bbR}{\mathbb{R}}
\providecommand{\bbQ}{\mathbb{Q}}
\providecommand{\bbZ}{\mathbb{Z}}
\providecommand{\bbF}{\mathbb{F}}
\providecommand{\bbC}{\mathbb{C}}
\providecommand{\Iwa}[1]{\bbF_p[\![#1]\!]}
\DeclareMathOperator{\GL}{GL}
\DeclareMathOperator{\RG}{RG}
\DeclareMathOperator{\U}{U}
\begin{document}

\begin{abstract}
  These notes provide a short introduction to the asymptotic behaviour of Betti number in towers of finite sheeted covering spaces.
  For rational Betti numbers a convenient answer is given by L\"uck's approximation theorem. We give a proof of L\"uck's theorem, 
  discuss generalizations and mention some related open problems. 
  Then we proceed to discuss the growth of mod-$p$ Betti numbers, where many problems remain open. We take a closer look at the special case of $p$-adic analytic towers and discuss
  a strong approximation theorem due to Bergeron-Linnell-L\"uck-Sauer and Calegari-Emerton.
\end{abstract}

\maketitle

These lecture notes provide a short introduction to the field of homology growth. They are composed out of two lectures, 
which I have given at the Borel seminar 2017 on ``\emph{Growth in Geometry and Topology}'' in Les Diablerets. These two lectures were complemented by the wonderful 
course of Holger Kammeyer on $L^2$-Invariants. Here we will mainly use the theory of $L^2$-Betti numbers; we refer to
L\"uck's book \cite{LuckBook} and to Kammeyer's notes on $L^2$-invariants \cite{HolgerL2} for an introduction. This survey does not contain 
new results (and we do not claim originality).

\bigskip

We first describe the main question. Let $X_0$ be a connected CW-complex of finite type, i.e., there are only finitely many cells of every given dimension.
Suppose we are given a \emph{tower} $(X_i)_{i\in\bbN}$ of connected covering spaces of $X_0$:
\begin{equation*}
  \dots \longrightarrow X_3 \longrightarrow X_2 \longrightarrow X_1 \longrightarrow X_0.
\end{equation*}
In the area of homology growth one is concerned with the following subject:
\begin{center}
 How do \emph{homological invariants} of the $X_i$ behave as $i$ tends to infinity?
\end{center}
In this course we will only consider the asymptotic behaviour of Betti numbers (rational, $L^2$ or mod-$p$).

To set the ground, we begin with a basic reformulation.
Let $\widetilde{G} = \pi_1(X_0)$ denote the fundamental group and let $\widetilde{X}$ be the universal covering space of $X_0$.
The fundamental group $\widetilde{G}$ acts freely on $\widetilde{X}$ and $X_0 \cong \widetilde{G} \backslash \widetilde{X}$.
Let $\widetilde{G}_i$ be the characteristic subgroup associated to the covering projection $X_i \to X$, then
\begin{equation*}
    X_i \cong \widetilde{G}_i \backslash \widetilde{X}.
\end{equation*}
For simplicity, we assume that all the covering projections $X_i \to X_0$ are \emph{normal} (sometimes called Galois coverings), so that
the subgroups $\widetilde{G}_i \trianglelefteq \widetilde{G}$ are normal.
We have $\widetilde{G}_{i+1} \subseteq \widetilde{G}_{i}$ for all $i$ and we define $N = \bigcap_{i\in\bbN} \widetilde{G}_i$.
We set $X = N\backslash \widetilde{X}$, $G = \widetilde{G}/N$ and $G_i = \widetilde{G}_i / N$. 
Thus, we can always place ourselves in the
following situation:
\begin{quote}
   There is a group $G$ and a free $G$-CW-complex $X$ such that $X_0 = G\backslash X$ is of finite type. There is a normal tower of subgroups
   $(G_i)_{i\in\bbN}$ in $G$ such that the spaces $X_i = G_i \backslash X$ form a tower of normal covering spaces over $X_0$.
\end{quote}

\begin{definition}
  Let $G$ be a group. 
  A \emph{(finite index) normal tower} in $G$ is a decreasing sequence 
  \begin{equation*}
      G \supseteq G_1 \supseteq G_2 \supseteq G_3 \supseteq \dots
  \end{equation*}
  of (finite index) normal subgroups $G_i \trianglelefteq G$ such that $\bigcap_{i\in \bbN} G_i = \{1\}$.
\end{definition}

A group $G$ is \emph{residually finite}, if for every non-trivial element $g\in G\setminus\{1\}$ there is a finite index subgroup $H \leq_{f.i.} G$ which does not contain $g$.
For example, every finitely generated linear group is residually finite (Mal'cev 1940).
If a group $G$ admits a finite index normal tower, then $G$ is clearly residually finite.
Conversely, every countable, residually finite group has a finite index normal tower.

\begin{exercise}
(a) Show that every countable, residually finite group has a finite index normal tower.

\smallskip

(b) Let $F_d$ be the free group on $d\in \bbN$ generators. Prove that $F_d$ is residually finite.
\end{exercise}

\section{L\"uck's approximation theorem}

\subsection{L\"uck's approximation theorem}

\begin{theorem}[L\"uck \cite{Luck1994}]
  Let $X$ be a free $G$-CW-complex such that $G \backslash X$ is of finite type. Every finite index normal tower $(G_i)_{i\in\bbN}$ in $G$ satisfies
  \begin{equation*}
     \lim_{i\to\infty} \frac{b_q(G_i \backslash X; \bbQ)}{[G:G_i]} = b_q^{(2)}(X;\calN(G))
  \end{equation*}
  for every $q \in \bbN_0$, where $b_q^{(2)}(X;\calN(G))$ denotes the $q$-th $L^2$-Betti number.
  In particular, the limit on the left hand side exists and is independent of the chosen tower.
\end{theorem}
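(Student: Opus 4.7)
The plan is to convert the statement into a question about spectral measures of self-adjoint matrices over $\bbZ[G]$, prove moment convergence using the normal tower condition, and then compare the mass at the atom $\{0\}$ by an integrality argument.

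\textbf{Setup.} Since $G\backslash X$ is of finite type, each cellular chain module satisfies $C_q(X;\bbZ) \cong \bbZ[G]^{n_q}$, and the combinatorial Laplacian $\Delta_q = \partial_{q+1}\partial_{q+1}^\ast + \partial_q^\ast \partial_q$ is a positive self-adjoint matrix $A \in \Mat_{n_q}(\bbZ[G])$. Via Hodge decomposition, the $\ell^2$-operator induced by $A$ has kernel of dimension $b_q^{(2)}(X;\calN(G))$ with respect to the canonical trace $\tau$ on $\Mat_{n_q}(\calN(G))$. For each $i$, $A$ descends to a matrix $A^{(i)}$ acting on $\bbC[G/G_i]^{n_q}$; in the coset basis this is an \emph{integer} matrix, and its kernel has complex dimension $b_q(G_i\backslash X;\bbQ)$. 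Writing $\tau_i = \tfrac{1}{[G:G_i]}\Tr$, and letting $\mu$, $\mu_i$ denote the spectral measures of $A$ and $A^{(i)}$ with respect to $\tau$ and $\tau_i$, the theorem reduces to the claim
\begin{equation*}
  \mu_i(\{0\}) \;\longrightarrow\; \mu(\{0\}) \quad \text{as } i \to \infty.
\end{equation*}

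\textbf{Moment convergence.} For each $g \in G$, the action on $\bbC[G/G_i]$ permutes cosets, and since $G_i \trianglelefteq G$ one computes $\tau_i(g) = 1$ if $g \in G_i$ and $0$ otherwise. The condition $\bigcap_i G_i = \{1\}$ then yields $\tau_i(g) \to \mathbf{1}_{g=1} = \tau(g)$. Applying this entrywise to powers of $A$ gives $\int t^k\, d\mu_i(t) \to \int t^k\, d\mu(t)$ for every $k \geq 0$, and combined with the uniform bound $\|A^{(i)}\| \leq \|A\|$ this upgrades to weak$^\ast$ convergence $\mu_i \to \mu$ of probability measures on a common compact interval $[0,C]$.

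\textbf{Comparing the mass at $0$.} Weak convergence together with the right-continuity of $F(\lambda) := \mu([0,\lambda])$ at $0$ (taking $\lambda \to 0^+$ along continuity points of $F$) immediately gives the upper bound $\limsup_i \mu_i(\{0\}) \leq \mu(\{0\})$. The reverse inequality is the main obstacle, since small positive eigenvalues of $A^{(i)}$ could in principle collapse onto the atom $\{0\}$ of $\mu$ in the limit. Here the decisive input is the integrality of $A^{(i)}$: the product of its nonzero eigenvalues has absolute value $\geq 1$, equivalently
\begin{equation*}
  \int_{(0,C]} \log t \; d\mu_i(t) \;\geq\; 0.
\end{equation*}
Splitting this integral at a threshold $\varepsilon \in (0,1)$ and estimating the $(\varepsilon,C]$-piece trivially by $\log C$ yields $\mu_i\bigl((0,\varepsilon]\bigr) \leq \log C / |\log \varepsilon|$ uniformly in $i$. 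Combined with the Portmanteau inequality $\liminf_i \mu_i([0,\varepsilon)) \geq \mu([0,\varepsilon)) \geq \mu(\{0\})$, this gives
\begin{equation*}
  \liminf_i \mu_i(\{0\}) \;\geq\; \mu(\{0\}) - \frac{\log C}{|\log \varepsilon|},
\end{equation*}
and letting $\varepsilon \to 0$ closes the gap.
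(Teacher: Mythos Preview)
Your proof is correct and follows essentially the same route as the paper: reduce to the combinatorial Laplacian over $\bbZ[G]$, establish weak convergence of the normalized spectral measures via moment (trace) convergence, obtain Kazhdan's inequality from Portmanteau, and then use integrality of the finite-dimensional operators (product of nonzero eigenvalues $\geq 1$) to bound $\mu_i((0,\varepsilon])$ uniformly and close the gap. One minor slip worth flagging: the inequality $\|A^{(i)}\|\leq\|A\|$ is false in general (for non-amenable $G$ the quotient representations are not weakly contained in the regular one; e.g.\ the Laplacian on a finite $4$-regular quotient of the free group has norm $4>2\sqrt{3}$), but all you actually need is \emph{some} uniform bound on the supports, and the $\ell^1$-norm $\kappa_A$ of the coefficients of $A$ does the job, exactly as in the paper's Lemma on the universal bound for the operator norm.
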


\begin{remark}
 Before the work of L\"uck only \emph{Kazhdan's inequality} 
   \begin{equation*}
     \limsup_{i\to\infty} \frac{b_q(G_i \backslash X; \bbQ)}{[G:G_i]} \leq b_q^{(2)}(X;\calN(G))
   \end{equation*}
   was known.
\end{remark}

\begin{example}
  (a) Let $G = F_d$ be the free group on $d$ generators. We consider the space $X = EG$ (the $2d$-regular tree) with the action of $G$.
   We choose a finite index normal tower $(G_i)_i$ in $G$.
   By the theorem of Nielsen-Schreier, the group $G_i$ is a free group of rank $d(G_i) = (d-1)[G:G_i] + 1$.
   The space $G_i\backslash X$ is the classifying space of $G_i$ and hence $b_1(G_i,\bbQ) = d(G_i)$.
   We deduce from the approximation theorem that $b_1^{(2)}(F_d) = d-1$.
   
   \medskip
   
   (b) Let $G$ be a non-trivial group with a finite classifying space $BG$.
       Assume that $G$ admits a finite index normal tower $(G_i)_{i\in\bbN}$ such that $G_i \cong G$ for all $i$.
       Such a tower exists, for example, for a free abelian group $G = \bbZ^n$ and for the $3$-dimensional Heisenberg group $G= H_3(\bbZ)$.
       L\"uck's approximation theorem implies $b_q^{(2)}(G) = 0$ for all $q$. Indeed, the term in the numerator is constant, whereas the index tends to infinity.
\end{example}

Now we begin with the proof of L\"uck's approximation theorem; we proceed along the lines of an argument which Roman Sauer explained to me a couple of years ago.
As a first step, we reduce the statement of L\"uck's approximation theorem to an assertion on matrices over the group ring $\bbZ[G]$.
Recall that there is an involution $*$ of the complex group algebra $\bbC[G]$ defined for an element $a = \sum a_g g \in \bbC[G]$ 
by $a^* = \sum_{g} \bar{a}_g g^{-1}$. This involution has the property that $\rho(a^*)$ is the adjoint of $\rho(a)$ for
every unitary representation $\rho\colon G \to \U(\calH)$ of $G$.
Moreover, for all positive integers $m,n$, this extends to an operation $^*\colon \Mat_{m,n}(\bbC[G]) \to \Mat_{n,m}(\bbC[G])$
by imposing $(A^*)_{i,j} = A_{j,i}^*$.

Consider the cellular chain complex $C_\bullet(X)$ of $X$
\begin{equation*}
   C(X):\quad \dots C_{q+1}(X) \stackrel{\partial_{q+1}}{\longrightarrow} C_{q}(X) \stackrel{\partial_{q}}{\longrightarrow}  C_{q-1}(X) \longrightarrow \dots
\end{equation*}
The action of $G$ on $X$ is, by assumption, free and 
 $G\backslash X$ is of finite type, i.e.,
there are only finitely many orbits of cells in each dimension. We infer that $C_q(X)$ is a finitely generated free $\bbZ[G]$-module.
We fix a $\bbZ[G]$-basis of $C_q(X)$ for every $q\in \bbN_0$ by choosing a representative of every orbit of $q$-dimensional cells.
We get an isomorphism $C_q(X) \cong \bbZ[G]^{n_q}$ for some integer $n_q$.
Therefore we can describe the chain complex $C_\bullet(X)$ as
\begin{equation*}
 C(X):\quad \dots \bbZ[G]^{n_{q+1}} \stackrel{A_{q+1}}{\longrightarrow} \bbZ[G]^{n_{q}} \stackrel{A_{q}}{\longrightarrow}  \bbZ[G]^{n_{q-1}} \longrightarrow \dots
\end{equation*}
where the boundary operator $\partial_q$ can now be represented as the multiplication (from the right)
with a matrix $A_q \in \Mat_{n_q,n_{q-1}}(\bbZ[G])$.

For every finite index normal subgroup $N \trianglelefteq_{f.i.} G$, the cellular chain complex $C_\bullet(N \backslash X)$ is isomorphic to $\bbZ \otimes_{\bbZ[N]} C_\bullet(X)$.
In particular, we obtain
\begin{equation*}
    C(N\backslash X; \bbC)\colon\quad \dots \bbC[G/N]^{n_{q+1}} \stackrel{r_{N}(A_{q+1})}{\longrightarrow} \bbC[G/N]^{n_{q}} \stackrel{r_{N}(A_{q})}{\longrightarrow}  \bbC[G/N]^{n_{q-1}} \longrightarrow \dots
\end{equation*}
where $r_N(A_q)$ denotes the right multiplication operator defined by $A_q$ reduced modulo $N$.
By the Hodge-de Rham argument, the $q$-th complex homology $H_q(X/N,\bbC)$ space is isomorphic to
the kernel of the Laplacian $r_N(A_qA^*_q + A^*_{q+1}A_{q+1})$.

The cellular $L^2$-chain complex of $X$ w.r.t.~the action of $G$ is $C^{(2)}(X) := \ell^2(G) \otimes_{\bbZ[G]} C_\bullet(X)$. As before
the boundary operators are given by the operators 
\begin{equation*}
 r^{(2)}(A_q)\colon \ell^2(G)^{n_q} \to \ell^{2}(G)^{n_{q-1}}
\end{equation*}
defined by right multiplication with the matrices $A_q$.
For a given degree $q$ the $q$-th $L^2$-homology $H^{(2)}_q(X,\calN(G))$ 
is isomorphic to the kernel of the corresponding Laplacian 
$r^{(2)}( A_qA^*_q + A^*_{q+1}A_{q+1} )$ by the Hodge-de Rham argument; see \cite[Lemma 1.18]{LuckBook}.
We conculde that L\"uck's approximation theorem is a consequence the following algebraic approximation theorem for matrices.

\begin{theorem}[L\"uck Approximation for matrices]\label{thm:approximation-for-matrices}
 Let $G$ be a group and
 let $A \in \Mat_{n}(\bbZ[G])$ be a square matrix over the integral group ring.
 For every finite index normal tower $(G_i)_{i \in \bbN}$ in $G$ the following holds:
 \begin{equation*}
   \lim_{i \to \infty} \frac{\dim_{\bbC} \ker(r_{G_i}(A))}{[G:G_i]} = \dim_{\calN(G)} \ker(r^{(2)}(A)).
 \end{equation*}
\end{theorem}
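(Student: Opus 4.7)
My approach encodes both sides as the mass at $\{0\}$ of spectral measures, proves weak convergence of these measures by matching moments, and then upgrades weak convergence at the single point $\{0\}$ using the integrality of $A$.

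First, since $\ker(r_{G_i}(A)) = \ker(r_{G_i}(A^*A))$ and likewise for $r^{(2)}$, and since $A^*A$ lies again in $\Mat_n(\bbZ[G])$, I may assume $A$ is positive self-adjoint. Fix $K \geq \max(\norm{A},1)$, so that the spectra of $r^{(2)}(A)$ and each $r_{G_i}(A)$ sit in $[0,K]$. Let $\mu^{(2)}$ be the spectral measure of $r^{(2)}(A)$ with respect to the matrix trace on $\Mat_n(\calN(G))$, and let $\mu^{(i)}$ be the counting measure of the eigenvalues of $r_{G_i}(A)$ normalized by $1/[G:G_i]$. By construction $\mu^{(2)}(\{0\})$ and $\mu^{(i)}(\{0\})$ are exactly the two sides of the claim, so it suffices to prove $\mu^{(i)}(\{0\}) \to \mu^{(2)}(\{0\})$.

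Next, the $k$-th moment of $\mu^{(i)}$ equals $\frac{1}{[G:G_i]}\Tr(r_{G_i}(A^k))$ and that of $\mu^{(2)}$ equals the von Neumann trace of $r^{(2)}(A^k)$. For any $a = \sum_g a_g g \in \bbC[G]$, a direct calculation gives $\frac{1}{[G:G_i]}\Tr(r_{G_i}(a)) = \sum_{g \in G_i} a_g$, which tends to $a_e$ because $\bigcap_i G_i = \{1\}$ forces $G_i$ to miss any fixed finite subset of $G\setminus\{1\}$ eventually. Summing over matrix entries yields moment convergence, hence weak convergence $\mu^{(i)} \to \mu^{(2)}$ on $[0,K]$. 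Applying the portmanteau theorem to the closed set $\{0\}$ immediately gives Kazhdan's inequality $\limsup_i \mu^{(i)}(\{0\}) \leq \mu^{(2)}(\{0\})$.

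The heart of the proof is the reverse inequality, and it uses integrality of $A$ in an essential way. Since $r_{G_i}(A)$ is a positive semi-definite matrix with integer entries, the product of its nonzero eigenvalues is the last nonzero coefficient (up to sign) of the characteristic polynomial, hence a positive integer and in particular at least $1$. Passing to logarithms,
\begin{equation*}
  0 \;\leq\; \int_{(0,K]} \log(\lambda)\, d\mu^{(i)}(\lambda) \;\leq\; \log(\epsilon)\, \mu^{(i)}((0,\epsilon]) + \log(K)
\end{equation*}
for every $\epsilon \in (0,1)$, which rearranges to the key uniform bound $\mu^{(i)}((0,\epsilon]) \leq \log(K)/|\log \epsilon|$. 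Combining this with weak convergence at continuity points of the distribution function of $\mu^{(2)}$ gives
\begin{equation*}
  \liminf_i \mu^{(i)}(\{0\}) \;\geq\; \mu^{(2)}([0,\epsilon]) - \frac{\log K}{|\log \epsilon|},
\end{equation*}
and letting $\epsilon \to 0$ along such continuity points completes the proof. The main obstacle will be this lower bound: weak convergence alone allows arbitrary eigenvalue accumulation just above $0$, so some extra rigidity is required. The integrality of $A$ supplies exactly this rigidity through a logarithmic spectral repulsion at zero, which is the quantitative gain of L\"uck's theorem over Kazhdan's earlier inequality.
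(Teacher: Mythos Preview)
Your proof is correct and follows essentially the same strategy as the paper: reduce to a positive self-adjoint integral matrix, establish weak convergence of the normalized spectral measures by matching traces (moments), deduce Kazhdan's inequality from the Portmanteau theorem, and obtain the reverse inequality from the integrality of the product of nonzero eigenvalues via a logarithmic bound on $\mu^{(i)}((0,\epsilon])$. The paper packages this last step as the inequality $\det(\mu_i)\geq 1$ for the Fuglede--Kadison determinant and a separate lemma converting that into $\mu_i(\{0\})\to\mu(\{0\})$, but the content is identical to your direct estimate; the only cosmetic slip is that your bound $\log(K)/|\log\epsilon|$ should carry a factor of $n$, since the total mass of $\mu^{(i)}$ is $n$ rather than $1$.
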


\begin{remark}
 In the proof that we give here, it is essential that the matrix $A$ has entries in the integral group ring $\bbZ[G]$.
 It is easy to extend it to $\bbQ[G]$ and a similar proof allows to extend it $\overline{\bbQ}[G]$.
 Recently, Andrei Jaikin-Zapirain extended the result to matrices over $\bbC[G]$, which is a deeper result; see \cite{Jaikin2017}.
\end{remark}

\begin{definition}
  Let $A \in \Mat_n(\bbC[G])$. 
  We say that $A$ is \emph{globally positive}, if $A^* = A$ and $\rho(A) \colon \calH^n \to \calH^n$ is a positive
  operator for every unitary representation $\rho\colon G \to \U(\calH)$ of $G$.
\end{definition}

\begin{example} 
  Let $A \in \Mat_{m,n}(\bbC[G])$, then $A^*A$ and $AA^*$ are globally positive.
\end{example}

The matrix $A_qA^*_q + A^*_{q+1}A_{q+1}$ which defines the Laplacian is globally positive. Therefore we will restrict our discussion to this case and
prove the L\"uck approximation for globally positive matrices $A \in \Mat_n(\bbZ[G])$ over the integral group ring.
It is an easy exercise to deduce the general approximation for matrices from the specific result on globally positive matrices.

\begin{exercise}
 Assume that the approximation property in Theorem \ref{thm:approximation-for-matrices} 
 is known for all globally positive matrices $A \in \Mat_n(\bbZ[G])$ over the integral group ring.
 Deduce that it holds for arbitrary matrices $A \in \Mat_n(\bbQ[G])$ over the rational group ring. 
\end{exercise}

\begin{lemma}[Universal bound for the operator norm]\label{lem:universal-bound-operator-norm}
 For every $A \in \Mat_n(\bbC[G])$ there is a positive real number $\kappa_A > 0$, 
 such that for every unitary representation $\rho\colon G \to \U(\calH)$ of $G$
 the operator norm of $\rho(A)\colon \calH^n \to \calH^n$ is bounded above by $\kappa_A$, i.e., 
 \begin{equation*}
    \Vert\rho(A)\Vert_{\calH^n} \leq \kappa_A.
 \end{equation*}
\end{lemma}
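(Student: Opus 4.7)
The plan is to reduce the bound for matrices of group ring elements to a bound for single elements, which follows immediately from unitarity.

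First I would treat the case $n=1$, i.e., bound $\|\rho(a)\|$ for $a = \sum_{g \in G} a_g g \in \bbC[G]$. Since $a$ has finite support and each $\rho(g)$ is unitary with $\|\rho(g)\| = 1$, the triangle inequality gives
\begin{equation*}
  \|\rho(a)\|_{\calH} \;\leq\; \sum_{g \in G} |a_g|\,\|\rho(g)\|_{\calH} \;=\; \sum_{g \in G} |a_g| \;=:\; \|a\|_1.
\end{equation*}
Crucially, $\|a\|_1$ depends only on $a$ and not on the representation $\rho$, so this is already a universal bound when $n=1$.

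Next I would bootstrap to matrices by a standard entrywise estimate. Writing $A = (a_{ij})_{i,j=1}^{n}$ and taking $v = (v_1,\dots,v_n) \in \calH^n$, a direct computation combined with Cauchy--Schwarz yields
\begin{equation*}
  \|\rho(A)v\|_{\calH^n}^2 \;=\; \sum_{i=1}^{n} \Bigl\| \sum_{j=1}^{n} \rho(a_{ij})v_j \Bigr\|_{\calH}^2 \;\leq\; \sum_{i=1}^{n} \Bigl( \sum_{j=1}^{n} \|\rho(a_{ij})\|\, \|v_j\| \Bigr)^{2} \;\leq\; \Bigl( \sum_{i,j} \|\rho(a_{ij})\|^2 \Bigr) \|v\|_{\calH^n}^2.
\end{equation*}
Inserting the bound from the first step on each entry gives $\|\rho(A)\|_{\calH^n} \leq \sqrt{\sum_{i,j} \|a_{ij}\|_1^2}$, so one may simply define
\begin{equation*}
  \kappa_A \;:=\; \Bigl( \sum_{i,j=1}^{n} \|a_{ij}\|_1^{2} \Bigr)^{1/2},
\end{equation*}
which is finite because each $a_{ij} \in \bbC[G]$ has finite support.

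There is no real obstacle in this argument; the only conceptual point is the initial observation that unitarity of $\rho(g)$ forces $\|\rho(g)\| = 1$ uniformly in $\rho$, which is what makes $\|a\|_1$ a universal majorant. Everything after that is a soft passage from scalars to $n \times n$ matrices.
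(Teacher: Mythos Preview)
Your proof is correct and follows essentially the same idea as the paper: use unitarity to get $\|\rho(g)\|=1$ and then apply the triangle inequality. The paper packages this in a single step by writing $A$ as a $\bbC$-linear combination of elementary matrices, each having a single group element as its only nonzero entry (hence operator norm $\leq 1$), which gives $\kappa_A = \sum_{i,j}\|a_{ij}\|_1$; your two-step version with Cauchy--Schwarz arrives at the slightly sharper constant $\kappa_A = \bigl(\sum_{i,j}\|a_{ij}\|_1^{2}\bigr)^{1/2}$.
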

\begin{proof}
   Let  $B \in \Mat_n(\bbC)$ be a matrix with only one non-zero entry $g \in G$.
   It is easy to see $\rho(B)$ is a local isometry in every unitary representation $\rho\colon G \to \U(\calH)$, this means $\rho(B)\rho(B^*)$ is a projection, 
   and thus $\Vert\rho(B)\Vert \leq 1$.
   A general matrix $A$ is a finite linear combination of
   such elementary matrices $B$. Therefore we can define $\kappa_A$ to be the sum of the absolute values
   of all the coefficients in all entries of $A$.
\end{proof}

\subsection{Spectral measures and weak convergence}

Let $\calM(\bbR)$ denote the set of finite, compactly supported Borel measures on $\bbR$.

We recall the notion of spectral measures.
Let $\calH$ be a finite dimensional complex Hilbert space.
Let $S\colon \calH \to \calH$ be a linear operator which is self-adjoint (i.e., $S^* = S$) and positive (i.e., $\langle Sx,x\rangle \geq 0$ for all $x\in \calH$).
We know that such an operator is diagonalisable and has only non-negative real eigenvalues. In addition, the set of eigenvalues is bounded above by the operator norm $\Vert S \Vert$.
For every Borel subset $A \subseteq \bbR$ we define 
$\mu_S(A)$ to be the number of eigenvalues of $S$ (counted with multiplicities) which lie in $A$. 
This defines a measure  $\mu_S \in \calM(\bbR)$ which is called the \emph{spectral measure} of $S$.
Observe that the equality
\begin{equation*}
     \Tr(P(S)) = \int_\bbR P(t) d\mu_S(t)
\end{equation*}
holds for every polynomial $P \in \bbC[T]$. Moreover, by definition $\dim_\bbC \ker(S) = \mu_S(\{0\})$.
In the same sense a spectral measure exists for positive operators on finitely generated Hilbert $\calN(G)$-modules.

\begin{lemma}
 For every positive operator $S\colon \ell^2(G)^n \to \ell^2(G)^n$ of Hilbert $\calN(G)$-modules, there is a unique
 measure $\mu_S\in \calM(\bbR)$, with support in $[0,\Vert S \Vert]$, such that
 \begin{equation*}
     \Tr_{\calN(G)}(P(S)) = \int_\bbR P(t) d\mu_S(t) 
\end{equation*}
for every polynomial $P \in \bbC[T]$. In addition $\dim_{\calN(G)} \ker(S) = \mu_S(\{0\})$.
\end{lemma}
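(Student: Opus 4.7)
The plan is to obtain $\mu_S$ as the pushforward, under $\Tr_{\calN(G)}$, of the spectral projection-valued measure of $S$. Since $S$ is bounded, self-adjoint, positive and $G$-equivariant, it is a positive element of the finite von Neumann algebra $\Mat_n(\calN(G))$ acting on $\ell^2(G)^n$. The bounded Borel functional calculus, applied inside this von Neumann algebra, produces a projection-valued measure $E$ on the Borel subsets of $[0, \Vert S \Vert]$, with $E(A) \in \Mat_n(\calN(G))$ a projection for every Borel set $A$, such that $f(S) = \int_\bbR f(t) \, dE(t)$ for every bounded Borel function $f$. The crucial point is that each $E(A)$ lies in $\Mat_n(\calN(G))$, so that its value under $\Tr_{\calN(G)}$ is defined and finite.

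I would then set $\mu_S(A) := \Tr_{\calN(G)}(E(A))$ for each Borel set $A \subseteq \bbR$. Routine verifications show that $\mu_S$ is a finite, compactly supported Borel measure: its total mass is $\Tr_{\calN(G)}(\id) = n$, it vanishes outside $[0, \Vert S \Vert]$ because $E$ does, and countable additivity follows from the $\sigma$-additivity of $E$ in the strong operator topology together with the normality of $\Tr_{\calN(G)}$. The moment identity then falls out of the functional calculus: for each monomial $T^k$ one has $S^k = \int t^k \, dE(t)$, so $\Tr_{\calN(G)}(S^k) = \int t^k \, d\mu_S(t)$, and linearity extends this to arbitrary polynomials $P \in \bbC[T]$.

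For the kernel statement, the orthogonal projection of $\ell^2(G)^n$ onto $\ker(S)$ coincides with the spectral projection $E(\{0\})$ (since $Sx = 0$ if and only if the spectral resolution of $x$ is concentrated at $0$), so $\dim_{\calN(G)} \ker(S) = \Tr_{\calN(G)}(E(\{0\})) = \mu_S(\{0\})$. Uniqueness is a standard Stone--Weierstrass argument: since polynomials are uniformly dense in $C([0, \Vert S \Vert])$, any two measures in $\calM(\bbR)$ with the same polynomial moments integrate continuous functions identically on a compact set containing their supports, hence agree by the Riesz representation theorem. The main conceptual step, and the only real obstacle, is invoking the Borel functional calculus inside $\Mat_n(\calN(G))$ (which is itself a finite von Neumann algebra, with trace obtained by summing the $\calN(G)$-traces of the diagonal entries) and knowing that the spectral projections land in this algebra; once this is in place, everything else is bookkeeping.
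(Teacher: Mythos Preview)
The paper does not actually supply a proof of this lemma; it is stated as a fact and the exposition moves on immediately to the definition of weak convergence. Your argument via the projection-valued spectral measure of $S$ inside the finite von Neumann algebra $\Mat_n(\calN(G))$, pushed forward by the normal trace $\Tr_{\calN(G)}$, is correct and is exactly the standard construction one has in mind here; the identification $\dim_{\calN(G)}\ker(S)=\Tr_{\calN(G)}(E(\{0\}))$ and the Stone--Weierstrass uniqueness step are likewise the expected ingredients.
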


\begin{definition}
  A sequence $(\mu_n)_{n\in \bbN}$ of measures $\mu_n \in \calM(\bbR)$ \emph{converges weakly} to $\mu \in \calM(\bbR)$, and we write 
  $\mu_n \stackrel{w}{\longrightarrow} \mu$,
  if 
  \begin{equation*}
      \lim_{n\to\infty} \int_{\bbR} f(t) d\mu_n(t) = \int_{\bbR} f(t) d\mu(t)
  \end{equation*}
  holds for every bounded continuous function $f\colon \bbR \to \bbC$. 
\end{definition}

\begin{exercise}\label{ex:weak-convergence}
 (a) Let $\mu \in \calM(\bbR)$ and let $(\mu_n)_{n\in \bbN}$ be a sequence of measures. Assume that 
 there is a compact set $C$ which contains the support of $\mu_n$ for every $n\in \bbN$ and assume further that
 \begin{equation*}
      \lim_{n\to\infty} \int_{\bbR} P(t) d\mu_n(t) = \int_{\bbR} P(t) d\mu(t)
  \end{equation*}
  holds for every polynomial $P \in \bbC[T]$. Show that $\mu_n \stackrel{w}{\longrightarrow} \mu$.\\
  \emph{Hint:} use the approximation theorem of Weierstra{\ss}.
  
  (b) Let $\mu_n \in \calM(\bbR)$ be a sequence of measures which converges weakly to $\mu \in \calM(\bbR)$. 
      Prove the \emph{Portmanteau Theorem}:
      the inequality
         \begin{equation*}
          \limsup_{n\to\infty} \int_{\bbR} f(t) d\mu_n(t)  \leq  \int_{\bbR} f(t) d\mu(t).
         \end{equation*}
      holds for every upper semi-continuous function $f\colon \bbR \to \bbR$ which is bounded from above.
      Deduce that $\limsup_{n\to\infty} \mu_n(A) \leq \mu(A)$ for every closed subset $A \subseteq \bbR$
      and $\liminf_{n\to\infty} \mu_n(U) \geq \mu(U)$ for every open subset $U \subset \bbR$.\\
      (Hint: every upper semi-continuous function is the pointwise limit of a non-increasing sequence of bounded continuous functions; this is known as Baire's theorem.)
\end{exercise}

\begin{lemma}[Weak convergence of spectral measures]\label{lem:weak-convergence}
  Let $G$ be a group and suppose $(G_i)_{i\in \bbN}$ is a finite index normal tower in $G$.
  For every globally positive matrix $A \in \Mat_n(\bbC[G])$,
  the sequence of normalized spectral measures $\mu_i := \frac{1}{[G:G_i]} \mu_{r_{G_i}(A)}$ 
  converges weakly to the $L^2$-spectral measure $\mu_{r^{(2)}(A)}$ as $i\to\infty$.
\end{lemma}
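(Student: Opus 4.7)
My plan is to apply Exercise \ref{ex:weak-convergence}(a). This requires two ingredients: all the measures in play must be supported in a common compact set, and the polynomial moments of $\mu_i$ must converge to those of $\mu_{r^{(2)}(A)}$.

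For the uniform support, note that $G$ acts unitarily on $\ell^2(G/G_i)$ by right translation, so Lemma \ref{lem:universal-bound-operator-norm} gives $\|r_{G_i}(A)\| \leq \kappa_A$ for every $i$, and likewise $\|r^{(2)}(A)\|\leq \kappa_A$. Since $A$ is globally positive, each of $r_{G_i}(A)$ and $r^{(2)}(A)$ is positive and its spectral measure is supported in $[0,\kappa_A]$. For the moments, I would use that $P(r_{G_i}(A)) = r_{G_i}(P(A))$ and $P(r^{(2)}(A)) = r^{(2)}(P(A))$ for every $P \in \bbC[T]$, where $P(A)$ is computed inside $\Mat_n(\bbC[G])$ (this holds because only powers of a single matrix are involved, so the anti-homomorphism property of $r_?$ does not interfere). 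Combined with the defining property of the spectral measures, this reduces the convergence of $P$-moments to the single trace identity
\begin{equation*}
  \lim_{i\to\infty}\frac{1}{[G:G_i]}\Tr\bigl(r_{G_i}(B)\bigr) = \Tr_{\calN(G)}\bigl(r^{(2)}(B)\bigr) \qquad \text{for } B = P(A) \in \Mat_n(\bbC[G]).
\end{equation*}

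To verify this identity, by additivity over diagonal entries it suffices to treat a single $b = \sum_g b_g g \in \bbC[G]$. The $\calN(G)$-trace of $r^{(2)}(b)$ equals $b_1$ by the standard definition of the group trace. In the coset basis of $\bbC[G/G_i]$, right multiplication by $g \in G$ permutes basis vectors, and its ordinary trace equals $[G:G_i]$ when $g \in G_i$ (using normality) and $0$ otherwise. Summing over the coefficients of $b$ gives
\begin{equation*}
  \frac{1}{[G:G_i]}\Tr\bigl(r_{G_i}(b)\bigr) = \sum_{g \in G_i} b_g,
\end{equation*}
and since $b$ has finite support and $\bigcap_i G_i = \{1\}$, for all sufficiently large $i$ the support of $b$ meets $G_i$ only in $\{1\}$, so the right-hand side equals $b_1$. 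Summing the finitely many diagonal contributions of $B$ yields the desired trace identity, hence moment convergence, and Exercise \ref{ex:weak-convergence}(a) completes the argument.

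The conceptual core of the proof, rather than a genuine obstacle, is this last step: after rewriting via $P(r_?(A)) = r_?(P(A))$, every polynomial moment depends on only finitely many group elements, and residual finiteness (embodied in $\bigcap_i G_i = \{1\}$) is exactly what allows one to pass to the limit. Global positivity of $A$ and Lemma \ref{lem:universal-bound-operator-norm} enter only to bootstrap this moment convergence to weak convergence via the Weierstra\ss\ approximation theorem.
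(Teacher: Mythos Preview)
Your proof is correct and matches the paper's argument essentially step for step: uniform support via Lemma~\ref{lem:universal-bound-operator-norm}, reduction to polynomial moments via Exercise~\ref{ex:weak-convergence}(a), and the explicit trace computation on $\bbC[G/G_i]$ using finite support and $\bigcap_i G_i = \{1\}$. The only cosmetic difference is that the paper phrases the reduction to the linear case by noting that $A^k$ is again globally positive, whereas you invoke $P(r_?(A)) = r_?(P(A))$ directly; both amount to the same trace identity for $B = A^k$.
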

\begin{proof}
   By Lemma \ref{lem:universal-bound-operator-norm} all the measures are supported in a compact interval $[0,\kappa_A]$.
   It suffices (see Exercise \ref{ex:weak-convergence}) to verify 
   \begin{equation*}
      \lim_{i\to\infty} \int_{\bbR} P(t) d\mu_i(t) = \int_{\bbR} P(t) d\mu_{r^{(2)}(A)}(t)
  \end{equation*}
  for every polynomial $P \in \bbC[T]$. However, since $A^k$ is again globally positive for every $k\in\bbN$, it actually suffices to check 
  the above for the polynomial $P(T) = T$. In other words, we need to verify
  \begin{equation*}
      \lim_{i\to\infty} \frac{1}{[G:G_i]}\Tr(r_{G_i}(A))  = \Tr_{\calN(G)}(r^{(2)}(A)).
  \end{equation*}
  First assume that $n = 1$, i.e., that $A = \sum_{g\in G} c_g g \in \bbZ[G]$ is an element in the group ring.
  In this case $\Tr(r_{G_i}(A)) = [G:G_i] \sum_{g \in G_i} c_g$ since every $g \in G\setminus G_i$ acts like a fixed point free permutation on the standard basis vectors.
  On the other hand,
  $\Tr_{\calN(G)}(r^{(2)}(A)) = c_1$. The support of $A$ is finite and $\bigcap_{i\in\bbN} G_i = \{1\}$,  consequently
  $\frac{1}{[G:G_i]}\Tr(r_{G_i}(A)) = \Tr_{\calN(G)}(r^{(2)}(A))$ for all sufficiently large $i \in \bbN$.
  
  The assertion for general matrices $A \in \Mat_n(\bbC[G])$ follows from the same argument applied to every diagonal entry of the matrix $A$.
\end{proof}
\begin{example}
  In general, weak convergence is not sufficient to prove approximation.
  Let $\mu_i$ be the point measure at $\frac{1}{i}$. 
  This sequence of measures converges weakly to the point measure at $0$, however
  $ 0 = \lim_{i\to\infty} \mu_i(\{0\}) \neq  \mu(\{0\}) = 1$.
\end{example}

Nevertheless, the weak convergence of spectral measures obtained in Lemma \ref{lem:weak-convergence} and the 
Portmanteau Theorem (Exercise \ref{ex:weak-convergence})
immediately yield Kazhdan's inequality
\begin{equation*}
   \limsup_{i \to \infty} \frac{\dim_{\bbC} \ker(r_{G_i}(A))}{[G:G_i]} \leq \dim_{\calN(G)} \ker(r^{(2)}(A)).
 \end{equation*}
for every globally positive matrix $A \in \Mat_n(\bbC[G])$. 

\subsection{Fuglede-Kadison determinants and the proof of the Approximation Theorem}

\begin{definition}
 Let $\mu \in \calM(\bbR)$ be a measure. The real number
 \begin{equation*}
    \det(\mu) = \exp \int_{0^+}^\infty \ln(t) d\mu(t)
 \end{equation*}
  is called the \emph{Fuglede-Kadison determinant} of $\mu$.
  In particular, the Fuglede-Kadison determinant of a positive operator $S \colon \ell^{2}(G)^n \to \ell^2(G)^n$ of Hilbert $\calN(G)$-modules, is
  defined to be  $\det_{\calN(G)}(S) = \det(\mu_S)$ the determinant of its spectral measure. 
\end{definition}

\begin{lemma}\label{lem:bounded-FK-det}
 Let $\mu_i, \mu \in \calM(\bbR)$ measures supported on a compact interval $[0,C]$.
 Suppose that $\mu_i \stackrel{w}{\longrightarrow} \mu$ and that $\det(\mu_i) \geq \delta > 0$, 
 then $\lim_{i \to \infty}\mu_i(\{0\}) = \mu(\{0\})$.
\end{lemma}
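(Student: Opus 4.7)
The plan is to split the desired equality into the two inequalities $\limsup_i \mu_i(\{0\}) \le \mu(\{0\})$ and $\liminf_i \mu_i(\{0\}) \ge \mu(\{0\})$. The first is immediate from the Portmanteau theorem (Exercise \ref{ex:weak-convergence}) applied to the closed set $\{0\}$, so the whole difficulty lies in the lower bound. The idea is to quantify, using the determinant hypothesis, how little mass each $\mu_i$ can place on a small punctured interval $(0,\varepsilon]$, uniformly in $i$.

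More precisely, I first note that weak convergence applied to a bounded continuous function that equals $1$ on $[0,C]$ shows that the total masses $\mu_i([0,C])$ stay bounded, say by some constant $M_0$. Next I split the determinant integral at a small $\varepsilon > 0$:
\begin{equation*}
  \ln \delta \;\le\; \int_{0^+}^{\infty} \ln(t)\, d\mu_i(t) \;=\; \int_{0^+}^{\varepsilon} \ln(t)\, d\mu_i(t) + \int_{\varepsilon}^{C} \ln(t)\, d\mu_i(t).
\end{equation*}
On $(\varepsilon, C]$ the integrand is bounded above by $\max(\ln C, 0)$, so the second integral is at most some constant $M_1$ independent of $i$ and $\varepsilon$. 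On $(0, \varepsilon]$ one has $\ln(t) \le \ln\varepsilon < 0$ once $\varepsilon < 1$. This gives
\begin{equation*}
   \ln(\varepsilon)\, \mu_i\!\bigl((0,\varepsilon]\bigr) \;\ge\; \int_{0^+}^{\varepsilon}\ln(t)\, d\mu_i(t) \;\ge\; \ln\delta - M_1,
\end{equation*}
and therefore $\mu_i((0,\varepsilon]) \le (M_1 - \ln\delta)/|\ln\varepsilon|$ for all $i$ and all sufficiently small $\varepsilon$. This is the key quantitative input, and I expect it to be the main point of the proof; the rest is bookkeeping.

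With this uniform control in hand, I apply the Portmanteau theorem to the open set $(-1,\varepsilon) \subseteq \bbR$, noting that this set meets the supports of the $\mu_i$ and of $\mu$ only in $[0,\varepsilon)$. This yields
\begin{equation*}
   \liminf_{i\to\infty} \mu_i\!\bigl([0,\varepsilon)\bigr) \;\ge\; \mu\!\bigl([0,\varepsilon)\bigr) \;\ge\; \mu(\{0\}).
\end{equation*}
Writing $\mu_i(\{0\}) = \mu_i([0,\varepsilon)) - \mu_i((0,\varepsilon)) \ge \mu_i([0,\varepsilon)) - \mu_i((0,\varepsilon])$ and taking $\liminf_{i\to\infty}$, I combine this with the uniform bound of the previous paragraph to obtain
\begin{equation*}
   \liminf_{i\to\infty} \mu_i(\{0\}) \;\ge\; \mu(\{0\}) - \frac{M_1 - \ln\delta}{|\ln\varepsilon|}.
\end{equation*}
Letting $\varepsilon \to 0^+$ gives $\liminf_i \mu_i(\{0\}) \ge \mu(\{0\})$, which together with the Portmanteau upper bound completes the proof.
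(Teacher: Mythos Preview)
Your proof is correct and follows essentially the same route as the paper: the Portmanteau upper bound for the closed set $\{0\}$, the uniform estimate $\mu_i((0,\varepsilon]) \le L/|\ln\varepsilon|$ extracted from the determinant hypothesis, and the Portmanteau lower bound for an open interval around $0$, combined and then sent $\varepsilon\to 0$. The only cosmetic differences are your choice of $(-1,\varepsilon)$ versus the paper's $(-\varepsilon,\varepsilon)$ and your more explicit bookkeeping of the constants $M_0, M_1$ where the paper writes a single constant $L = L(C,\delta,\mu(\bbR))$.
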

\begin{proof}
  We know from the Portmanteau Theorem that $\limsup_{i\to\infty} \mu_i(\{0\}) \leq \mu(\{0\})$.
  
  Let $1 > \varepsilon > 0$ be given. We observe that
  \begin{equation*}
     \mu_i\bigl((0,\varepsilon)\bigr) \leq \frac{1}{|\ln(\varepsilon)|} \int_{0^+}^\varepsilon |\ln(t)| d\mu_i(t)
     \leq \frac{-\ln(\det(\mu_i)) + \ln(C) \mu_i(\bbR)}{|\ln(\varepsilon)|} \leq \frac{L}{|\ln(\varepsilon)|}
  \end{equation*}
  for all sufficiently large $i$ and some constant $L = L(C,\delta, \mu(\bbR)) > 0$.
  
  A short calculation yields
 \begin{align*}
    \liminf_{i\to\infty} \mu_i(\{0\}) + \frac{L}{|\log(\varepsilon)|}
    &\geq \liminf_{i\to\infty} \mu_i(\{0\}) + \limsup_{i\to \infty} \mu_i\bigl((0,\varepsilon)\bigr) \\
    &\geq \liminf_{i\to\infty} \mu_i\bigl((-\varepsilon,\varepsilon)\bigr) \geq \mu\bigl((-\varepsilon,\varepsilon)\bigr) \\
    &\geq \mu(\{0\}) 
    \end{align*}
where we use the Portmanteau Theorem (Exercise \ref{ex:weak-convergence}) in the third step.
Since $\varepsilon > 0$ was arbitrary, the theorem follows by taking the limit as $\varepsilon \to \infty$.
\end{proof}

\begin{proof}[Proof of Theorem \ref{thm:approximation-for-matrices}]
 Let $G$ be a group and let $(G_i)_{i\in \bbN}$ be a finite index normal tower.
 We consider a globally positive matrix $A \in \Mat_n(\bbZ[G])$ and the associated sequence
 $\mu_i := \frac{1}{[G:G_i]} \mu_{r_{G_i}(A)}$ of normalized spectral measures.
 We want to show that $\lim_{i\to\infty} \mu_i(\{0\}) = \mu_{r^{(2)}(A)}(\{0\})$ using Lemma \ref{lem:bounded-FK-det}.
 To this end we prove the L\"uck Lemma: $\det(\mu_i) \geq 1$.
 A power of the Fuglede-Kadison determinant $\det(\mu_i)^{[G:G_i]}$ is the product over all non-zero eigenvalues of $r_{G_i}(A)$.
 Since $r_{G_i}(A)$ can be represented by a matrix with integer entries, this 
 is a positive \emph{integer}. Indeed, it is the lowest non-zero coefficient of the characteristic polynomial of $r_{G_i}(A)$.
\end{proof}

\newpage 

\subsection{Generalizations and the Approximation Conjecture}
\subsubsection{Why normal subgroups?}
The notion of \emph{finite index normal towers} in the approximation theorem can be significantly weakened.
For every finite index subgroup $H \leq G$, we write $c_G(H)$ to denote the number subgroups conjugate to $H$. 
Similarly, for every element $g \in G$ we write $c_G(g,H)$ for the number 
of subgroups conjugate to $H$ which contain $g$.
\begin{definition}
  Let $G$ be a group. 
  A \emph{Farber sequence} in $G$ is a sequence $(G_i)_{i\in\bbN}$ of finite index subgroups of $G$
  such that 
  \begin{equation*}
     \lim_{i \to \infty}  \frac{c_G(g,G_i)}{c_G(G_i)} = 0
  \end{equation*}
  holds for every element $g \in G$ with $g\neq 1$.
\end{definition}

\begin{exercise}
 (a) Show that a finite index normal tower $(G_i)_{i\in\bbN}$ is a Farber sequence.
 
 \smallskip
 
 (b) Give an example of a group $G$ and a Farber sequence $(G_i)_{i\in \bbN}$ such that none of the subgroups $G_i$ is normal in $G$.
 
  \smallskip
 
 (c) Prove that every group which admits a Farber sequence is residually finite.
\end{exercise}

\begin{theorem}[Farber \cite{Farber1998}]
  Let $X$ be a free $G$-CW-complex such that $G \backslash X$ is of finite type. Every Farber sequence $(G_i)_{i\in\bbN}$ in $G$ satisfies
  \begin{equation*}
     \lim_{i\to\infty} \frac{b_q(G_i \backslash X; \bbQ)}{[G:G_i]} = b_q^{(2)}(X;\calN(G)).
  \end{equation*}
\end{theorem}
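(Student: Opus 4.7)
The plan is to repeat L\"uck's proof essentially verbatim, with the regular representation of the quotient $G/G_i$ (which required normality) replaced everywhere by the permutation representation $\rho_i\colon G \to \U(\ell^2(G/G_i))$ of $G$ on the coset space. Via the standard chain-level identification $C_\bullet(G_i\backslash X;\bbC) \cong \bbC[G/G_i]^{n_\bullet}$ (Shapiro's lemma), the boundaries correspond to $\rho_i(A_q)$, and the Hodge--de Rham argument reduces the theorem to the matrix statement: for every globally positive $A \in \Mat_n(\bbZ[G])$,
\begin{equation*}
    \lim_{i\to\infty} \frac{\dim_\bbC \ker \rho_i(A)}{[G:G_i]} = \dim_{\calN(G)} \ker r^{(2)}(A).
\end{equation*}

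Next I would establish weak convergence of the normalized spectral measures $\mu_i := [G:G_i]^{-1} \mu_{\rho_i(A)}$ to $\mu_{r^{(2)}(A)}$. The operator-norm bound of Lemma \ref{lem:universal-bound-operator-norm} applies to the unitary $\rho_i$, so all measures are supported in $[0,\kappa_A]$. By Exercise \ref{ex:weak-convergence} and the fact that every power $A^k$ remains globally positive, it suffices to match first moments. The key combinatorial identity is
\begin{equation*}
    \frac{\Tr \rho_i(g)}{[G:G_i]} = \frac{c_G(g,G_i)}{c_G(G_i)} \qquad \text{for every } g \in G,
\end{equation*}
obtained by counting fixed cosets: $\rho_i(g)$ fixes $xG_i$ iff $g \in xG_ix^{-1}$, the conjugate $xG_ix^{-1}$ depends only on the coset $xG_i$, and the map $xG_i \mapsto xG_ix^{-1}$ onto the conjugacy class of $G_i$ has constant fibres of size $[N_G(G_i):G_i]$. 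The Farber hypothesis makes this ratio tend to $0$ for $g \neq 1$, while it is identically $1$ for $g = 1$. Expanding $A = \sum_g c_g g$ in the scalar case (and applying the argument diagonally for matrices) yields $[G:G_i]^{-1}\Tr \rho_i(A) \to c_1 = \Tr_{\calN(G)} r^{(2)}(A)$, which is the required first-moment convergence.

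Finally, I would verify the L\"uck lemma $\det(\mu_i) \geq 1$: in the basis of $\ell^2(G/G_i)$ indexed by cosets each $\rho_i(g)$ is a permutation matrix, so $\rho_i(A)$ has integer entries, and $\det(\mu_i)^{[G:G_i]}$ equals, up to sign, the lowest non-zero coefficient of an integer characteristic polynomial, hence a positive integer. Lemma \ref{lem:bounded-FK-det} then yields $\mu_i(\{0\}) \to \mu_{r^{(2)}(A)}(\{0\})$, which is exactly the desired approximation of Laplacian kernel dimensions. I expect the main obstacle to be the combinatorial trace identity, precisely because it is the only place where the proof genuinely departs from L\"uck's; once that identity is in hand, the integrality argument behind the L\"uck lemma transfers without modification, since it requires only that $\rho_i$ be a permutation representation on a finite set.
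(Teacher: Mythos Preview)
Your proposal is correct and follows precisely the route the paper indicates: the paper does not give a full proof of Farber's theorem but leaves it as an exercise, with hints instructing the reader to (a) establish the trace convergence $\lim_{i\to\infty}[G:G_i]^{-1}\Tr(r_{G_i}(a))=a_1$ for the permutation representation on $G/G_i$, and (b) adapt the remainder of L\"uck's argument. Your combinatorial trace identity $\Tr\rho_i(g)/[G:G_i]=c_G(g,G_i)/c_G(G_i)$ is exactly what is needed for (a), and your verification of the L\"uck lemma via the integrality of permutation matrices is the intended adaptation for (b).
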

\begin{exercise}
  Let $G$ be a group with a Farber sequence $(G_i)_{i\in \bbN}$.
  
 (a) Consider an element $a = \sum_{g\in G} a_g g \in \bbC[G]$ of the complex group algebra.
    Show that $\lim_{i\to\infty} [G:G_i]^{-1} \Tr(r_{G_i}(a)) = a_1$. Use this to prove the weak convergence of spectral measures (Lemma \ref{lem:weak-convergence}) for Farber sequences.
    
    \smallskip
    
 (b) Prove the approximation theorem of Farber by adapting the proof of L\"uck's theorem.
\end{exercise}

\subsubsection{Why finite index subgroups?}

\begin{conjecture}[Approximation Conjecture, cf.\ Conjecture 13.1 in \cite{LuckBook}]
Let $G$ be a group and let $X$ be a free $G$-CW-complex such that $G \backslash X$ is of finite type.
For every normal tower $(G_i)_{i\in\bbN}$ in $G$
\begin{equation}\label{eq:approximation-general}
   \lim_{i\to \infty} b_q^{(2)}(G_i \backslash X,\calN(G/G_i)) = b_q^{(2)}(X,\calN(G))
\end{equation}
for every $q\in \bbN_0$.
\end{conjecture}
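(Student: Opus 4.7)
The plan is to mirror the strategy of Theorem~\ref{thm:approximation-for-matrices}, replacing the finite-dimensional group algebras $\bbC[G/G_i]$ by the group von Neumann algebras $\calN(G/G_i)$ throughout, and hoping that a suitable generalisation of L\"uck's Lemma still provides a uniform lower bound on Fuglede-Kadison determinants. Fix $\bbZ[G]$-bases of the modules $C_q(X)$ and represent each boundary $\partial_q$ as right multiplication by $A_q \in \Mat_{n_q,n_{q-1}}(\bbZ[G])$. Because $G_i$ is normal in $G$, the quotient $G/G_i$ acts freely on $G_i\backslash X$, and the $L^2$-chain complex $\ell^2(G/G_i)\otimes_{\bbZ[G]} C_\bullet(X)$ has boundary operators $r^{(2)}_{G/G_i}(A_q)$ given by right multiplication with the images of $A_q$ under $\bbZ[G]\to\bbZ[G/G_i]$. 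The Hodge-de Rham argument then identifies $b_q^{(2)}(G_i\backslash X,\calN(G/G_i))$ with $\dim_{\calN(G/G_i)} \ker\bigl(r^{(2)}_{G/G_i}(L_q)\bigr)$, where $L_q = A_qA_q^* + A_{q+1}^*A_{q+1}$ is the globally positive Laplacian. So it suffices to prove the matrix version of \eqref{eq:approximation-general}: for every globally positive $A \in \Mat_n(\bbZ[G])$ and every normal tower $(G_i)$ in $G$,
\begin{equation*}
  \lim_{i\to\infty} \dim_{\calN(G/G_i)} \ker\bigl(r^{(2)}_{G/G_i}(A)\bigr) = \dim_{\calN(G)} \ker\bigl(r^{(2)}(A)\bigr).
\end{equation*}

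Let $\mu_i$ be the spectral measure of $r^{(2)}_{G/G_i}(A)$ with respect to $\Tr_{\calN(G/G_i)}$ and $\mu$ that of $r^{(2)}(A)$. Lemma~\ref{lem:universal-bound-operator-norm} shows they are all supported on $[0,\kappa_A]$. I would first establish weak convergence $\mu_i \stackrel{w}{\longrightarrow} \mu$ by the same argument as Lemma~\ref{lem:weak-convergence}: since powers of globally positive matrices remain globally positive, Exercise~\ref{ex:weak-convergence} reduces the task to convergence of first moments. For $a = \sum_g a_g g \in \bbZ[G]$, the identity $\Tr_{\calN(G/G_i)}(r^{(2)}_{G/G_i}(a)) = \sum_{g \in G_i} a_g$ stabilises at $a_1 = \Tr_{\calN(G)}(r^{(2)}(a))$ as soon as $G_i$ intersects the finite support of $a$ only in the identity, and the matrix case reduces to the diagonal entries. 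The Portmanteau Theorem (Exercise~\ref{ex:weak-convergence}) then yields the Kazhdan-type upper bound $\limsup_i \mu_i(\{0\}) \leq \mu(\{0\})$.

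The real obstacle is the reverse inequality $\liminf_i \mu_i(\{0\}) \geq \mu(\{0\})$, which one would hope to extract from Lemma~\ref{lem:bounded-FK-det}. What is needed is a uniform positive lower bound
\begin{equation*}
  \det(\mu_i) \;=\; \det\nolimits_{\calN(G/G_i)}\bigl(r^{(2)}_{G/G_i}(A)\bigr) \;\geq\; \delta > 0
\end{equation*}
valid for all $i$. In the finite index setting, L\"uck's Lemma was available at no cost: a power of $\det(\mu_i)$ is the smallest nonzero coefficient of the characteristic polynomial of an integer matrix, hence a positive integer. In the general setting the quotients $G/G_i$ may be infinite, so no elementary integrality argument applies, and what is required is precisely L\"uck's \emph{Determinant Conjecture}: for any group $\Gamma$ and any $B \in \Mat(\bbZ[\Gamma])$, one has $\det_{\calN(\Gamma)}(B) \geq 1$.

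The proposal is therefore conditional: the scheme above reduces the Approximation Conjecture for $(G_i)$ to the Determinant Conjecture for every quotient $G/G_i$. In classes of groups where the Determinant Conjecture is known---for instance residually amenable and, more generally, sofic groups by work of Elek and Szab\'o---this strategy delivers an unconditional proof. The hard and open part is genuinely the determinant step: a uniform lower bound on Fuglede-Kadison determinants of integer matrices over an arbitrary group von Neumann algebra seems to lie beyond current techniques, and is widely regarded as the central obstacle to the Approximation Conjecture in full generality.
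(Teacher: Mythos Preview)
Note that the statement is a \emph{conjecture}: the paper does not prove it but only records (immediately after stating it) that, via Lemma~\ref{lem:bounded-FK-det}, it would follow from the Determinant Conjecture for the quotients $G/G_i$, and then cites Elek--Szab\'o for the sofic case. Your proposal spells out precisely this reduction in more detail---weak convergence of spectral measures plus Portmanteau for the Kazhdan inequality, then Lemma~\ref{lem:bounded-FK-det} conditional on a uniform determinant bound---and correctly flags the determinant step as the genuine open obstacle, so it matches the paper's own discussion.
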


As before, this conjecture can be reformulated in terms of matrices. 
This conjecture generalizes L\"uck's approximation theorem, due to the following observation:
if $G/G_i$ is finite, then $b_q^{(2)}(G_i \backslash X,\calN(G/G_i)) = \frac{b_q(G_i \backslash X,\bbQ)}{[G:G_i]}$.

In view of Lemma \ref{lem:bounded-FK-det}, the Approximation Conjecture follows from the so-called Determinant Conjecture.
The relevance of the Determinant Conjecture for approximation was first realized by Schick \cite{Schick01}.
\begin{conjecture}[Determinant Conjecture, cf.\ 13.2 in \cite{LuckBook}]
Let $G$ be a group and let $A \in \Mat_n(\bbZ[G])$ be a globally positive matrix.
Then the Fuglede-Kadison determinant satisfies $\det_{\calN(G)}(r^{(2)}(A)) \geq 1$.
\end{conjecture}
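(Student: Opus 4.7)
Since the Determinant Conjecture is still open in general, I will sketch a proof under the assumption that $G$ is \emph{residually finite}. This is the setting most naturally accessible with the machinery developed above, and indeed was the first case established (by Schick). The conjecture is also known for sofic groups after Elek-Szab\'o, but that extension requires substantially new ideas and no group is at present known to lie outside that class.

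Fix a finite index normal tower $(G_i)_{i\in\bbN}$ in $G$, which exists by residual finiteness, and let $\mu_i = \frac{1}{[G:G_i]}\mu_{r_{G_i}(A)}$ and $\mu = \mu_{r^{(2)}(A)}$. By Lemma~\ref{lem:universal-bound-operator-norm} these measures are all supported in a common compact interval $[0,\kappa_A]$, and Lemma~\ref{lem:weak-convergence} gives weak convergence $\mu_i \stackrel{w}{\longrightarrow} \mu$. The essential arithmetic input is the \emph{L\"uck Lemma} from the proof of Theorem~\ref{thm:approximation-for-matrices}: since $A$ has entries in $\bbZ[G]$, the operator $r_{G_i}(A)$ is representable by an integer matrix, so $\det(\mu_i)^{[G:G_i]}$, being the product of its nonzero eigenvalues, equals the lowest nonzero coefficient of its characteristic polynomial and is therefore a positive integer. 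Hence $\ln\det(\mu_i) = \int_{0^+}^{\kappa_A} \ln(t)\, d\mu_i(t) \geq 0$ for every $i$.

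The principal obstacle is that $\ln$ is unbounded near $0$, so weak convergence does not directly pass to convergence of Fuglede-Kadison determinants. I would overcome this by truncation. For any continuity point $\varepsilon \in (0,1]$ of $\mu$,
\begin{equation*}
  0 \leq \int_{0^+}^{\kappa_A} \ln(t)\, d\mu_i(t) = \underbrace{\int_{(0,\varepsilon)} \ln(t)\, d\mu_i(t)}_{\leq\, 0} + \int_{[\varepsilon,\kappa_A]} \ln(t)\, d\mu_i(t),
\end{equation*}
which forces $\int_{[\varepsilon,\kappa_A]} \ln(t)\, d\mu_i(t) \geq 0$. On the compact set $[\varepsilon,\kappa_A]$ the function $\ln$ is bounded and continuous, so weak convergence, applied in the form valid for bounded functions that are continuous off a $\mu$-null set, gives $\int_{[\varepsilon,\kappa_A]} \ln(t)\, d\mu(t) \geq 0$. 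As $\varepsilon$ decreases to $0$ through continuity points of $\mu$, this expression is a monotonically non-increasing, non-negative real, so its limit --- which equals $\int_{0^+}^{\kappa_A} \ln(t)\, d\mu(t)$ by monotone convergence --- is likewise $\geq 0$. Exponentiation yields $\det_{\calN(G)}(r^{(2)}(A)) \geq 1$.

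For groups that are not residually finite the scheme collapses because there is no tower of finite quotients to supply the integrality underlying $\det(\mu_i) \geq 1$. The natural replacement is a sofic approximation (an asymptotic almost-action on finite sets), which once again yields integer matrices at each approximating stage; this is the route of Elek-Szab\'o for sofic groups. The general case thus rests on producing either a non-sofic group or a genuinely new mechanism, and both appear out of reach at present.
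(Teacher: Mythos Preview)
The paper does not prove this statement: it is presented as an open \emph{conjecture}, with only the remark that sofic groups satisfy it (Elek--Szab\'o) and that it implies the Approximation Conjecture via Lemma~\ref{lem:bounded-FK-det}. You correctly acknowledge this at the outset and instead establish the residually finite case, which is the natural target given the tools developed in the paper.

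Your argument for that case is correct. The ingredients are precisely those the paper assembled for Theorem~\ref{thm:approximation-for-matrices} --- the uniform spectral bound (Lemma~\ref{lem:universal-bound-operator-norm}), weak convergence of the normalized spectral measures (Lemma~\ref{lem:weak-convergence}), and the integrality argument giving $\det(\mu_i)\geq 1$ --- but you redirect them towards bounding $\det(\mu)$ rather than $\mu(\{0\})$. Truncating at a continuity point $\varepsilon$ of $\mu$ and observing that the $(0,\varepsilon)$ contribution to $\int_{0^+}\ln\,d\mu_i$ is non-positive is the standard device for pushing the inequality through the weak limit despite the singularity of $\ln$ at $0$; this is essentially Schick's proof. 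One small remark: since $\ln$ changes sign on $[\varepsilon,\kappa_A]$ when $\kappa_A>1$, the monotone convergence step as $\varepsilon\to 0$ should strictly be applied after splitting at $t=1$ (monotone convergence governs only the part on $[\varepsilon,1)$, while the part on $[1,\kappa_A]$ is fixed), but this is routine and the conclusion is unaffected.
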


In general these two conjectures are open and it is not clear if one should really expect them to hold for all groups. Nevertheless, it is of interest to understand precisely when 
they are satisfied. 
If $G$ is a group and $(G_i)_{i\in\bbN}$ is a normal tower such that \eqref{eq:approximation-general} holds for every $X$,
then we say that $G$ satisfies the Approximation Conjecture w.r.t.\ $(G_i)_i$.
\begin{theorem}[Elek-Szab\'o \cite{ElekSzabo2005}]
  (a) Every sofic group satisfies the Determinant Conjecture.
  
  \smallskip
  
  (b) Let $G$ be a group and let $(G_i)_{i\in\bbN}$ be a normal tower.
      Assume that every quotient $G/G_i$ is \emph{sofic}, then $G$ satisfies the Approximation Conjecture w.r.t.\ $(G_i)_{i\in\bbN}$. 
\end{theorem}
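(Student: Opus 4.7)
The plan is to exploit soficity, which provides asymptotic approximations of $G$ by finite symmetric groups: maps $\sigma_k\colon G \to \mathrm{Sym}(n_k)$ that are asymptotically multiplicative and asymptotically free with respect to the normalized Hamming distance $d_{n_k}(\pi,\tau) = \frac{1}{n_k}|\{i : \pi(i)\neq \tau(i)\}|$. Sending each group element to its permutation matrix and extending linearly turns $\sigma_k$ into a linear map $\Mat_n(\bbZ[G]) \to \Mat_{n\cdot n_k}(\bbZ)$ which is a $*$-algebra homomorphism only in an asymptotic sense.

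For part (a), fix a globally positive $A \in \Mat_n(\bbZ[G])$. The asymptotic multiplicativity and freeness of $\sigma_k$ imply that for every $m \in \bbN$,
\begin{equation*}
\lim_{k\to\infty} \frac{1}{n_k}\Tr(\sigma_k(A)^m) = \Tr_{\calN(G)}(r^{(2)}(A)^m).
\end{equation*}
Combined with the universal norm bound (Lemma \ref{lem:universal-bound-operator-norm}) and Exercise \ref{ex:weak-convergence}(a), this yields weak convergence of normalized spectral measures $\mu_k := \frac{1}{n_k}\mu_{\sigma_k(A)} \stackrel{w}{\longrightarrow} \mu_{r^{(2)}(A)}$; some care is needed because $\sigma_k(A)$ is only asymptotically self-adjoint and positive, but replacing it by a symmetrization or by $\sigma_k(A^2)$ circumvents this technicality. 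The crucial algebraic input is that $\sigma_k(A)$ has integer entries, so its characteristic polynomial lies in $\bbZ[t]$ and the product of its non-zero eigenvalues (up to sign, the lowest non-zero coefficient) is an integer of absolute value at least $1$; this gives the uniform lower bound $\det(\mu_k) \geq 1$.

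The main obstacle is to transport this lower bound through the weak limit to conclude $\det(\mu_{r^{(2)}(A)}) \geq 1$, since $\log t$ is unbounded near $0$ and the Portmanteau theorem does not apply directly. The idea I would use is to split $\int_0^C \log t\,d\mu_k = \int_0^\varepsilon \log t\,d\mu_k + \int_\varepsilon^C \log t\,d\mu_k$; for $\varepsilon < 1$ the first summand is non-positive, while the second converges to $\int_\varepsilon^C \log t\,d\mu_{r^{(2)}(A)}$ by weak convergence and continuity of $\log$ on $[\varepsilon,C]$. Combining and letting $\varepsilon \to 0^+$ yields $\int_0^C \log t\,d\mu_{r^{(2)}(A)} \geq 0$, which is the Determinant Conjecture.

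For part (b), I would apply (a) to each sofic quotient $G/G_i$ and to the induced matrix $A_i \in \Mat_n(\bbZ[G/G_i])$, obtaining $\det_{\calN(G/G_i)}(r^{(2)}(A_i)) \geq 1$ for every $i$. The trace computation underlying Lemma \ref{lem:weak-convergence} only uses $\bigcap_i G_i = \{1\}$ and so generalizes verbatim to show that the $\calN(G/G_i)$-spectral measures of $r^{(2)}(A_i)$ converge weakly to $\mu_{r^{(2)}(A)}$. Lemma \ref{lem:bounded-FK-det} now applies with $\delta = 1$ and delivers $\lim_i \mu_i(\{0\}) = \mu_{r^{(2)}(A)}(\{0\})$, which is the matrix reformulation of the Approximation Conjecture and hence yields the $L^2$-Betti number approximation for $X$.
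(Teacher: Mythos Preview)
The paper does not contain a proof of this theorem; it is stated with a citation to \cite{ElekSzabo2005} and immediately followed by the definition of sofic groups, with no argument given. So there is nothing in the paper to compare your proposal against.

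That said, your outline is faithful to the Elek--Szab\'o strategy and is essentially correct. A few comments on the details. For part~(a), the passage from $\det(\mu_k)\geq 1$ to $\det(\mu_{r^{(2)}(A)})\geq 1$ via the splitting $\int_{0^+}^C = \int_{0^+}^\varepsilon + \int_\varepsilon^C$ works: since $\int_{0^+}^\varepsilon \log t\,d\mu_k\leq 0$ one gets $\int_\varepsilon^C\log t\,d\mu_k\geq 0$, and weak convergence gives $\int_\varepsilon^C\log t\,d\mu\geq 0$; then $\varepsilon\to 0^+$ and monotone convergence finish. One technical point you should make explicit is that $\mathbf{1}_{[\varepsilon,C]}\log t$ is not continuous at $\varepsilon$, so invoke Portmanteau for continuity sets (choose $\varepsilon$ with $\mu(\{\varepsilon\})=0$) or replace the indicator by a continuous cutoff. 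Another point: you write ``$\sigma_k(A)$ is only asymptotically self-adjoint and positive''; the clean fix, used in the literature, is to prove the determinant bound first for $B^*B$ with $B\in\Mat_n(\bbZ[G])$, where $\sigma_k(B)^*\sigma_k(B)$ is genuinely positive with integer entries, and then observe that every globally positive integral matrix has the same kernel and spectral measure behaviour as a suitable $B^*B$.

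For part~(b) your reduction is exactly right: the trace computation in Lemma~\ref{lem:weak-convergence} uses only $\bigcap_i G_i=\{1\}$ and the definition of $\Tr_{\calN(G/G_i)}$, so it goes through unchanged to give weak convergence of the $\calN(G/G_i)$-spectral measures; part~(a) supplies $\det(\mu_i)\geq 1$ for each $i$; and Lemma~\ref{lem:bounded-FK-det} yields the conclusion. This is how the implication (a)$\Rightarrow$(b) is set up in the paper's surrounding discussion (the sentence preceding the statement of the Determinant Conjecture).
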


The \emph{sofic groups} were introduced by Gromov \cite{Gromov1999} -- under a different name --
and studied further by B.\ Weiss \cite{Weiss2000}. We briefly recall the definition given in \cite[Def.~1.2]{ElekSzabo2005} and we refer to \cite{ElekSzabo2006} for further properties.

\begin{definition}\label{def:soficgroup}
A group $G$ is \emph{sofic} if for every $\varepsilon >0$ and every finite subset $W \subseteq G$,
there exists a finite set $X$ and a function $f \colon G \to \Aut(X)$ (i.g.\ no homomorphism) such that
\begin{enumerate}
\item for all $u,v \in W$ the set 
\begin{equation*}
L_{u,v} = \{\:x\in X\:|\: f(uv)(x) = (f(u)\circ f(v))(x) \:\}
\end{equation*}
has at least $(1-\varepsilon)|X|$ elements.
\item $f(1) = \id_X$ and for all $u\in W\setminus \{1\}$ the fixed point set $\fix_X(f(u))$ has at most $\varepsilon|X|$ elements. 
\end{enumerate}
\end{definition}
Every residually finite group is sofic and every amenable group is sofic.
The class of sofic groups is closed under direct limits, inverse limits, taking subgroups and free products. Moreover, sofic-by-amenable extensions are sofic (see \cite[Thm.\ 1]{ElekSzabo2006}).
The class of sofic group is so large, that at present no example of a non-sofic group is known. 
This leads to an important open problem:
\begin{question}
  Is every group sofic?
\end{question}

\section{The growth of Betti numbers in positive characteristic}

Let $p$ be a prime number.
In the previous section we discussed the approximation theorem, which identifies the $L^2$-Betti number as the limit of a sequence of 
normalized $\bbQ$-Betti numbers. Is there an analogous theory for the $\bbF_p$-Betti numbers?

\begin{question}\label{qu:convergence-mod-p}
 Let $G$ be a residually finite group with a finite index normal tower $(G_i)_{i\in\bbN}$.
 Suppose that $X$ is a free $G$-CW-complex such that $G\backslash X$ is of finite type.
 Does the sequence of normalized $q$-th $\bbF_p$-Betti numbers
 \begin{equation*}
    \frac{b_q(G_i\backslash X; \bbF_p)}{[G:G_i]}
 \end{equation*}
 converge as $i$ tends to infinity?
 If yes, what is the limit? Does it depend on the finite index normal tower?
\end{question}

\begin{remark}
 By the universal coefficient theorem we always have $b_q(G_i\backslash X; \bbF_p) \geq b_q(G_i\backslash X; \bbQ)$.
 Hence, if the limit exists, it is greater or equal than the corresponding the $L^2$-Betti number.
\end{remark}

\begin{conjecture}[Conjecture 2.4 \cite{LuckSurvey2016}]\label{conj:mod-p-approx}
 Let $X$ be a \emph{contractible} free $G$-CW-complex s.t.\ $G\backslash X$ is of finite type. 
 For every finite index normal tower $(G_i)_{i\in \bbN}$, the limit
 \begin{equation*}
    \lim_{i\to \infty} \frac{b_q(G_i\backslash X, \bbF_p)}{[G:G_i]} = b^{(2)}_q(X,\calN{G})
 \end{equation*}
 exists and is the $L^2$-Betti number.
\end{conjecture}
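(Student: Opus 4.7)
The natural plan is to follow the three-step blueprint of the rational proof: reformulate in terms of matrices over $\bbZ[G]$, establish convergence of some appropriate analogue of the spectral measures, and control the value at zero via a determinant-type lower bound. Each of these three steps admits a plausible candidate in the mod-$p$ setting, but at the final step the integrality trick of the L\"uck Lemma is simply unavailable, and this is where I expect the essential obstruction.

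The first move is to reduce the conjecture to a statement about $p$-torsion. With the cellular chain complex of $X$ written as $\bbZ[G]^{n_\bullet}$ and boundary matrices $A_q \in \Mat_{n_q,n_{q-1}}(\bbZ[G])$, the complex $C_\bullet(G_i\backslash X;\bbZ)$ is the reduction modulo $G_i$, and the universal coefficient theorem gives
\begin{equation*}
   b_q(G_i\backslash X;\bbF_p) \;=\; b_q(G_i\backslash X;\bbQ) \;+\; \dim_{\bbF_p} H_q(G_i\backslash X;\bbZ)[p] \;+\; \dim_{\bbF_p} H_{q-1}(G_i\backslash X;\bbZ)[p].
\end{equation*}
By L\"uck's theorem the first term divided by $[G:G_i]$ already converges to $b_q^{(2)}(X;\calN(G))$, so the conjecture is equivalent to showing that for every $q$ the normalized $p$-torsion dimension $\dim_{\bbF_p} H_q(G_i\backslash X;\bbZ)[p]/[G:G_i]$ tends to zero. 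In matrix terms, this amounts to showing that the number of elementary divisors of $r_{G_i}(A_{q+1})$ with positive $p$-adic valuation is of order $o([G:G_i])$; contractibility of $X$ is what allows one to identify these homological $p$-torsions directly with the Smith normal forms of the boundary maps, without a correcting homology term.

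With this reduction in hand, I would attempt to mimic the spectral-measure argument over the $p$-adic integers. One could define, for each $i$, a normalized counting measure on $\bbZ_{\geq 0}$ recording the proportion of elementary divisors of $r_{G_i}(A_{q+1})$ of each $p$-adic valuation, and try to prove a weak convergence statement analogous to Lemma \ref{lem:weak-convergence}. This first step has a reasonable chance to succeed, since the proof of that lemma rested only on polynomial trace computations which make sense integrally. The decisive difficulty appears at the analogue of Lemma \ref{lem:bounded-FK-det}: in the archimedean proof we exploited that a nonzero rational eigenvalue of an integer matrix has absolute value at least one, so that $\det(\mu_i)^{[G:G_i]}$ is a positive integer. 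Over $\bbZ_p$ the analogous statement is false---there is no lower bound on the $p$-adic valuation of an invariant factor of an integer matrix---so any putative $p$-adic Fuglede-Kadison determinant fails to carry the integrality required to pin down the mass at $0$.

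Consequently, I do not expect an unconditional proof along these lines, and the hard part would be formulating the correct mod-$p$ analogue of the Determinant Conjecture---roughly, that the $p$-part of the appropriate Fuglede-Kadison-type invariant obeys a sublinear growth bound---and then verifying it where additional structure is available. For a pro-$p$ tower inside a $p$-adic analytic group one has the Iwasawa-theoretic machinery of Calegari-Emerton and Bergeron-Linnell-L\"uck-Sauer, in which the ring $\Iwa{G_1}$ plays the role of $\calN(G)$ and characteristic ideals replace the archimedean determinant; this is the framework in which I would expect Conjecture \ref{conj:mod-p-approx} to be genuinely accessible. For a general finite index normal tower, the absence of any analogue of the integrality bound $\det(\mu_i)\geq 1$ is the central obstacle, and I would expect a proof in full generality to require a fundamentally new input beyond the rational approximation framework.
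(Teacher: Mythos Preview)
The statement you were asked to prove is labeled in the paper as a \emph{conjecture}, and the paper offers no proof of it; it is presented as an open problem, followed only by an example showing why the hypothesis that $X$ be contractible cannot be dropped. Your proposal is therefore not really a proof attempt but a (largely accurate) analysis of why the rational approximation strategy does not extend directly to $\bbF_p$-coefficients, together with a correct identification of the central obstruction: the integrality argument behind the L\"uck Lemma $\det(\mu_i)\geq 1$ has no $p$-adic analogue, since the elementary divisors of an integer matrix may have arbitrarily large $p$-adic valuation. This diagnosis is sound and aligns with the paper's own discussion---in particular your remark that the $p$-adic analytic setting (via Iwasawa algebras, as in Theorem~\ref{thm:mod-p-approximation}) is where genuine progress is available is exactly what the paper goes on to develop.

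One minor inaccuracy: you write that ``contractibility of $X$ is what allows one to identify these homological $p$-torsions directly with the Smith normal forms of the boundary maps.'' This is not the role contractibility plays. The Smith normal form of the boundary maps of $C_\bullet(G_i\backslash X;\bbZ)$ always computes the torsion in $H_*(G_i\backslash X;\bbZ)$, regardless of whether $X$ is contractible. The example following the conjecture in the paper shows rather that if $X$ itself carries $p$-torsion in its integral homology, this torsion can persist at a linear rate down the tower, forcing the mod-$p$ limit to strictly exceed the $L^2$-Betti number. Contractibility rules out this mechanism by ensuring $H_*(X;\bbZ)=0$ in positive degrees, so that any $p$-torsion in $H_*(G_i\backslash X;\bbZ)$ must arise from the group action rather than from the space itself.
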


\begin{example}[cf.\ Example 6.2 in \cite{LLS2011}]
 Here is an example to illustrate why $X$ is assumed to be contractible in Conjecture \ref{conj:mod-p-approx}. 
 Let $d \geq 2$ and take a map $f\colon S^d \to S^d$ of degree $p$, i.e., $H_d(f)$ is multiplication with $p$.
 The existence of such a map can be seen by induction on $d$.
 Construct $Y = S^d \cup_f D^{d+1}$ by attaching a $(d+1)$-cell using the map $f$.
 The space $Y$ is simply connected.
 Moreover, the integral homology of $Y$ vanishes in every degree $q \geq 1$, except for degree $q = d$ where $H_d(Y) \cong \bbZ/p\bbZ$ by construction.
 
 Now consider the CW-complex $X_0 = S^1 \vee Y$ with fundamental group $G = \pi_1(X_0) \cong \bbZ$.
 The universal covering $X$ of $X_0$ is the real line $\bbR$ with a copy of $Y$ attached at every integer point; it is a free $G$-CW-complex.
 
 For $i \in \bbN$ we denote by $G_i \leq G$ the unique subgroup of index $i$. The space $X_i =  G_i\backslash X$ is a circle with $i$ copies of $Y$ attached.
 In particular, we obtain $H_d(X_i,\bbQ) = 0$ but $H_d(X_i, \bbF_p) = \bbF_p^i$.
 Therefore we get
 \begin{equation*}
   \lim_{i\to \infty} \frac{b_q(G_i\backslash X, \bbF_p)}{[G:G_i]} = 1  \: > \: 0 = \frac{b_q(G_i\backslash X, \bbQ)}{[G:G_i]} = b^{(2)}_q(X,\calN{G}).
 \end{equation*}
\end{example}

\medskip

In general it is unknown whether the sequence of normalized mod-$p$ Betti numbers converges at all. However, the sequence converges if
$[G:G_i]$ is a $p$-power for every $i$. 
\begin{theorem}[Bergeron-Linnell-L\"uck-Sauer, Thm.~1.6 \cite{BLLS2014}]\label{thm:mod-p-convergence}
 Let $G$ be a group and let $X$ be a free $G$-CW-complex such that $G\backslash X$ is of finite type.
 Assume that $(G_i)_{i\in\bbN}$ is a decreasing sequence of finite index subgroups $G_1 \geq G_2 \geq G_3 \dots$ in $G$
 such that $G_{i+1}$ is normal in $G_i$ and $[G_i:G_{i+1}]$ is a $p$-power for every $i$.
 For every $q \in \bbN_0$ the sequence of $q$-th $\bbF_p$-Betti numbers
  \begin{equation*}
    \frac{b_q(G_i\backslash X; \bbF_p)}{[G:G_i]}
 \end{equation*}
 is monotonically decreasing and converges as $i$ tends to infinity.
\end{theorem}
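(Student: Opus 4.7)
The plan is to establish pointwise monotonicity of the sequence, from which convergence is immediate since it is bounded below by zero. Writing $Q_i := G_i/G_{i+1}$, which is a finite $p$-group by assumption, and observing that $[G:G_{i+1}]/[G:G_i] = |Q_i|$ and that $X_{i+1} \to X_i$ is a regular $Q_i$-cover, monotonicity amounts to the inequality
\begin{equation*}
   b_q(X_{i+1};\bbF_p) \leq |Q_i| \cdot b_q(X_i;\bbF_p).
\end{equation*}

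I would prove this on the chain level. Since $Q_i$ acts freely on $X_{i+1}$ and the quotient $X_i$ has finite type, the cellular chain complex $C_\bullet := C_\bullet(X_{i+1};\bbF_p)$ consists of finitely generated free modules over $R := \bbF_p[Q_i]$, and its base change $C_\bullet \otimes_R \bbF_p$ coincides with $C_\bullet(X_i;\bbF_p)$. Writing $\partial_q$ and $\bar\partial_q$ for the respective boundary maps, the goal reduces to the purely algebraic claim that for any $R$-linear map $A\colon R^n \to R^m$ with reduction $\bar A\colon \bbF_p^n \to \bbF_p^m$ one has
\begin{equation*}
    \dim_{\bbF_p} \ker(A) \leq |Q_i| \cdot \dim_{\bbF_p} \ker(\bar A) \quad\text{and}\quad \dim_{\bbF_p} \im(A) \geq |Q_i|\cdot \dim_{\bbF_p} \im(\bar A).
\end{equation*}
Applying these to $\partial_q$ and $\partial_{q+1}$ and substituting into $b_q = \dim_{\bbF_p} \ker \partial_q - \dim_{\bbF_p} \im \partial_{q+1}$ immediately produces the desired bound.

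The algebraic claim rests on the fact that $R = \bbF_p[Q_i]$ is a local ring: its augmentation ideal $I_{Q_i}$ is nilpotent and the residue field is $\bbF_p$. Nakayama's lemma therefore yields, for every finitely generated $R$-module $M$, the bound $\dim_{\bbF_p}(M) \leq |Q_i| \cdot d(M)$, where $d(M) := \dim_{\bbF_p}(M \otimes_R \bbF_p)$ is the minimal number of $R$-generators, simply because $M$ is a quotient of the free module $R^{d(M)}$. I would apply this with $M = \coker(A)$; right-exactness of $-\otimes_R \bbF_p$ identifies $\coker(A) \otimes_R \bbF_p$ with $\coker(\bar A)$, so $\dim_{\bbF_p}\coker(A) \leq |Q_i|\cdot \dim_{\bbF_p}\coker(\bar A)$. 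Subtracting from the ambient dimensions $m|Q_i|$ and $m$ gives the bound on images, and rank-nullity then gives the bound on kernels.

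The main pitfall to resist is the naive route through the Lyndon--Hochschild--Serre spectral sequence $E^2_{p,q} = H_p(Q_i; H_q(X_{i+1};\bbF_p)) \Rightarrow H_{p+q}(X_i;\bbF_p)$: one would like to apply Nakayama directly to the $R$-module $H_q(X_{i+1};\bbF_p)$ and bound $d(H_q(X_{i+1};\bbF_p)) = \dim E^2_{0,q}$ by $b_q(X_i;\bbF_p)$, but only the possibly smaller $E^\infty$-piece embeds into $H_q(X_i;\bbF_p)$, and incoming higher differentials can genuinely shrink $E^2_{0,q}$. Working one level deeper with the boundary matrices and applying Nakayama to their cokernels rather than to homology circumvents this loss and yields the sharp chain-level inequality.
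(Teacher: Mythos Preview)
Your argument is correct, and it differs from the paper's in a pleasant way. The paper first refines the chain to the case $G_i/G_{i+1}\cong\bbZ/p\bbZ$ (using nilpotence of finite $p$-groups), and then exploits the very explicit structure $\bbF_p[\bbZ/p\bbZ]\cong\bbF_p[\tau]/(\tau^p)$: Gaussian elimination works over this ring, so any matrix can be diagonalised with entries $\tau^k$, and the kernel/image inequalities can be read off directly. You instead skip the refinement and work over $R=\bbF_p[Q_i]$ for an arbitrary finite $p$-group $Q_i$, using only that $R$ is local with nilpotent maximal ideal; Nakayama applied to $\coker(A)$ together with right-exactness of $-\otimes_R\bbF_p$ yields the cokernel bound, and the image and kernel bounds follow by subtraction. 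Your route is more conceptual and avoids the extra reduction step; the paper's route is more hands-on and in principle gives finer structural information (the Smith-type normal form), though that extra information is not used here. Your closing remark about why the spectral-sequence shortcut fails is also apt.
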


This result is a consequence of some elementary observations about modular representation theory.
 Let $R = \bbF_p[\bbZ/p\bbZ]$ be the group ring of the finite cyclic group of order $p$.
 The ring $R$ is isomorphic to $\bbF_p[T]/(T^p-1)$. However, $T^p-1 = (T-1)^p$ in a field of characteristic $p$; so we may write $\tau = T-1$ to obtain an isomorphism
 $R \cong \bbF_p[\tau]/(\tau^p)$. The ring $R$ is a local ring with unique maximal ideal $\tau R$ and every element $r \in R$
 can be written $r = \tau^k u$ for a unique power $k \in \{0,\dots,p\}$ and some unit $u \in R^\times$.
 
\begin{lemma}\label{lem:modular-lemma}
Let $\varphi\colon M \to M$ be a homomorphism of free finite rank $R$-modules
 and let $\overline{\varphi}\colon M/\tau M \to N/\tau N$ be the induced $\bbF_p$-linear map.
 Then the following two inequalities hold:
 \begin{align*}
     \dim_{\bbF_p} \im(\varphi) \geq p \dim_{\bbF_p} \im(\overline{\varphi})\\
     \dim_{\bbF_p} \ker(\varphi) \leq p \dim_{\bbF_p} \ker(\overline{\varphi})
 \end{align*}
\end{lemma}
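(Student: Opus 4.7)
The plan is to prove both inequalities by analyzing a natural $\tau$-adic filtration on the image submodule $L := \im(\varphi) \subseteq N$ (I read the statement as $\varphi\colon M \to N$, correcting what appears to be a typo), exploiting crucially the fact that the target $N$ is free.

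\textbf{Step 1: Setting up the filtration.} I would introduce $L_j := L \cap \tau^j N$ for $j=0,1,\dots,p$, which gives a decreasing filtration with $L_0 = L$ and $L_p = 0$. Writing $d_j := \dim_{\bbF_p} L_j/L_{j+1}$, one has
\begin{equation*}
  \dim_{\bbF_p} L = \sum_{j=0}^{p-1} d_j, \qquad d_0 = \dim_{\bbF_p} L/(L\cap \tau N) = \dim_{\bbF_p}\im(\overline{\varphi}),
\end{equation*}
the last equality because $\im(\overline{\varphi}) = (L+\tau N)/\tau N \cong L/(L\cap \tau N)$.

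\textbf{Step 2: The key monotonicity $d_{j-1} \leq d_j$.} Multiplication by $\tau$ clearly maps $L_{j-1}$ into $L_j$ and $L_j$ into $L_{j+1}$, so it induces an $\bbF_p$-linear map $\tau\colon L_{j-1}/L_j \to L_j/L_{j+1}$. I would prove that this map is injective: if $x \in L_{j-1}$ satisfies $\tau x \in L_{j+1} \subseteq \tau^{j+1} N$, then because $N$ is free, the kernel of the $\tau$-action on $N$ is exactly $\tau^{p-1}N$, so $\tau x \in \tau^{j+1}N$ forces $x \in \tau^j N + \tau^{p-1} N = \tau^j N$ (using $j \leq p-1$). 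Combined with $x\in L$ this gives $x \in L_j$, proving injectivity. Hence $d_0 \leq d_1 \leq \dots \leq d_{p-1}$.

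\textbf{Step 3: Finishing.} Summing, $\dim_{\bbF_p} L = \sum_{j=0}^{p-1} d_j \geq p\, d_0 = p\dim_{\bbF_p}\im(\overline{\varphi})$, which is the first inequality. For the second inequality I would use rank--nullity over $\bbF_p$: since $M$ is $R$-free of rank $m$, $\dim_{\bbF_p} M = pm$ and $\dim_{\bbF_p} M/\tau M = m$, so
\begin{equation*}
   p\dim_{\bbF_p}\ker(\overline{\varphi}) - \dim_{\bbF_p}\ker(\varphi) = \dim_{\bbF_p}\im(\varphi) - p\dim_{\bbF_p}\im(\overline{\varphi}) \geq 0
\end{equation*}
by the first inequality.

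The only step requiring actual content is Step 2; the whole argument hinges on the fact that $\tau$ has kernel $\tau^{p-1}N$ on the \emph{free} module $N$, so the subquotients $d_j$ cannot decrease. Without freeness of the ambient module (e.g.\ if one tried the same argument with $L$ taken abstractly), the $d_j$ could behave arbitrarily; this is really what is being used.
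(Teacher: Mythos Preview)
Your proof is correct, and it takes a genuinely different route from the paper. The paper argues that since every element of $R$ has the form $\tau^k u$ with $u$ a unit, Gau{\ss} elimination works over $R$; hence one can choose $R$-bases of $M$ and $N$ so that the matrix of $\varphi$ is diagonal with entries $\tau^{k_1},\dots$. If $j$ of these entries are units (i.e., $k=0$), then $\dim_{\bbF_p}\im(\overline{\varphi}) = j$ while $\dim_{\bbF_p}\im(\varphi) \geq jp$, and the kernel inequality follows the same way. Your argument, by contrast, is coordinate-free: you filter $L=\im(\varphi)$ by $L_j = L\cap\tau^j N$ and show that multiplication by $\tau$ injects each subquotient into the next, using only that $\ker(\tau\mid N)=\tau^{p-1}N$ on a free module. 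The Smith-normal-form approach in the paper gives a complete structural picture of $\varphi$ in one stroke (and makes both inequalities immediately visible), whereas your filtration argument isolates the single algebraic fact actually needed---injectivity of $\tau$ on the relevant subquotients---and would generalize more readily to situations where a normal form is unavailable but the target is still $\tau$-torsion-free in the appropriate graded sense.
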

\begin{proof}
 Due to the observation above, the Gau{\ss}-Elimination algorithm works in the ring $R$! Therefore, we can find $R$-bases of $M$ and $N$ such that
 the matrix $A$ of $\varphi$ with respect to the chosen bases is diagonal with entries of the form $\tau^k$ for exponents $k \in \{0,\dots,p\}$.
 Let $j$ be the number of entries of the form $1 = \tau^0$.
 Then  $\dim_{\bbF_p} \im(\overline{\varphi}) = j$, whereas
 $\dim_{\bbF_p} \im(\varphi)$ is at least $j\dim_{\bbF_p} R = j p$.
\end{proof}

\begin{proof}[Proof of Theorem \ref{thm:mod-p-convergence}]
 Every finite $p$-group is nilpotent, so we may refine the sequence $(G_i)_{i\in\bbN}$ so that $G_i/G_{i+1} \cong \bbZ/p\bbZ$.
 
 We have to show that $\frac{b_q(G_{i+1}\backslash X; \bbF_p)}{p} \leq b_q(G_i\backslash X; \bbF_p)$.
 The CW-complex $Y = G_{i+1}\backslash X$ carries a free action of $G_i/G_{i+1} \cong \bbZ/p\bbZ$. 
 We choose a set of representatives for the orbits of cells in every dimension to see that the
 mod-$p$ cellular chain complex of $Y$ is a chain complex of finite rank free modules over $R = \bbF_p[\bbZ/p\bbZ]$, i.e.,
 \begin{equation*}
 C(Y; \bbF_p):\quad \dots R^{n_{q+1}} \stackrel{\partial_{q+1}}{\longrightarrow} R^{n_{q}} \stackrel{\partial_{q}}{\longrightarrow}  R^{n_{q-1}} \longrightarrow \dots
\end{equation*}
Now Lemma \ref{lem:modular-lemma} implies
\begin{align*}
b_q(G_{i+1}\backslash X; \bbF_p) &= \dim_{\bbF_p} \ker(\partial_q) - \dim_{\bbF_p}\im(\partial_{q+1})\\ 
&\leq p \dim_{\bbF_p} \ker(\overline{\partial_q}) - p \dim_{\bbF_p}\im(\overline{\partial_{q+1}}) = p \: b_q(G_{i}\backslash X; \bbF_p) \qedhere
\end{align*}
\end{proof}

\subsection{The growth of Betti numbers in $p$-adic analytic towers}

The main step towards more precise results is to find a good candidate for the limit of the sequence of Betti numbers.
In the case of $L^2$-Betti numbers, we embedded the group ring $\bbZ[G]$ into the von Neumann algebra $\calN(G)$ and
used the dimension theory of von Neumann algebras to define $L^2$-Betti numbers.
In this vein, one hopes that it is possible to embed the group algebra $\bbF_p[G]$
into a ring with a useful notion of dimension of modules.
In some cases one can embed the group algebra $\bbF_p[G]$ even into a division algebra; this applies, for instance, if $G$ is a
torsion-free elementary amenable group \cite{LLS2011} or if $G$ is orderable; see \cite{BLLS2014} for a related result. However, in general this is not possible. In other cases, it is fruitful to restrict attention to a specific class of finite index normal towers.
Here we discuss the growth of Betti numbers in $p$-adic analytic towers, where both methods
can be combined to obtain strong results. This method was introduced by Calegari
and Emerton in \cite{CalegariEmerton09}; here we follow the more general account of Bergeron-Linnell-L\"uck-Sauer \cite{BLLS2014}.

\subsubsection{Compact $p$-adic Lie groups and Iwasawa algebras}
Let $p$ be a prime number. The ring of $p$-adic integers $\bbZ_p = \varprojlim_{k\in\bbN} \bbZ/p^k\bbZ$ is a complete discrete valuation ring
with maximal ideal $p\bbZ_p$. The field of fractions of $\bbZ_p$ is the field  $\bbQ_p$ of $p$-adic numbers, which is isomorphic to the completion of the field $\bbQ$ of rational numbers with respect to the $p$-adic valuation.
The group $\GL_N(\bbZ_p)$ of invertible $N\times N$-matrices over $\bbZ_p$ is a compact, totally disconnected topological group.
A basis of open neighbourhoods of the identity is given by the principal congruence subgroups, i.e. the kernels of the surjective homomorphisms $\red_{p^k} \colon \GL_N(\bbZ_p) \to \GL_N(\bbZ/p^k\bbZ)$.

\begin{definition}
 A topological group $H$ is a \emph{compact $p$-adic Lie group}  if it is isomorphic to a closed subgroup of $\GL_N(\bbZ_p)$ for some $N$.
\end{definition}
Sometimes compact $p$-adic Lie groups are also called compact $p$-adic analytic groups.
Just as real Lie groups these groups can also be defined as group objects in the category of $p$-adic analytic manifolds.
This will not be important for us, except that the \emph{dimension} of a $p$-adic Lie group is defined as the dimension of the underlying manifold.

Every compact $p$-adic Lie group contains a neighbourhood basis of open compact subgroups which are \emph{uniform pro-$p$ groups} (also called \emph{uniformly powerful} pro-$p$ groups).
These groups are the main tool in the theory of $p$-adic Lie groups since they admit a useful Lie theory. For a concise definition we refer to
\cite[Chapter 4]{DDMS}.
Here it suffices to mention that the principle congruence subgroups
$\ker(\red_{p^i}) \leq \GL_N(\bbZ_p)$ are uniform for all $i\geq 1$ if $p$ is odd and for all $i\geq 2$ if $p=2$.

Let $H \leq \GL_N(\bbZ_p)$ be a compact $p$-adic Lie group. The \emph{Iwasawa algebra} of $H$ is the profinite ring
\begin{equation*}
   \Iwa{H} = \varprojlim_{U \trianglelefteq_o H} \bbF_p[H/U]
\end{equation*}
where the limit is taken over the inverse system of open normal subgroups of~$H$.
The Iwasawa algebra is particularly well understood if $H$ is a uniform pro-$p$ group.
In this case the Iwasawa algebra $\Iwa{H}$ is a left and right noetherian ring which has no zero-divisors; see 7.25 in \cite{DDMS}.
Moreover, in this case the non-zero elements of $\Iwa{H}$ satisfy the left and right Ore condition. 
The Ore localization $D_H$ of $\Iwa{H}$ is a division algebra; it is the \emph{classical ring of fractions} of $\Iwa{H}$.

Now, let $H$ be any compact $p$-adic Lie group. As mentioned above, $H$ contains an open normal uniform pro-$p$ subgroup $U\trianglelefteq_o H$.
The Iwasawa algebra $\Iwa{H}$ is a finitely generated free module over $\Iwa{U}$ of rank $[H:U]$.
We will use this to define the \emph{rank} of finitely generated $\Iwa{H}$ modules.
\begin{definition}
   Let $M$ be a finitely generated $\Iwa{H}$-module and let $U\trianglelefteq_o H$ be an open normal uniform pro-$p$ subgroup.
   The \emph{rank} $\rnk_{\Iwa{H}}(M)$ of $M$ is 
   \begin{equation*}
     \rnk_{\Iwa{H}}(M) = \frac{1}{[H:U]} \dim_{D_U} \bigl( D_U \otimes_{\Iwa{U}} M \bigr).
   \end{equation*}
\end{definition}
The rank is independent of the chosen open normal uniform pro-$p$ subgroup $U$.
Moreover, the rank is additive in short exact sequences.
\begin{lemma}\label{lem:rank-is-additive}
  Let $H$ be a compact $p$-adic Lie group.
  Then $\rnk_{\Iwa{H}}(\Iwa{H}) = 1$ and
  every short exact sequence 
  \begin{equation*}
     0 \longrightarrow L_1 \longrightarrow L_2 \longrightarrow L_3 \longrightarrow 0
  \end{equation*}
   of finitely generated $\Iwa{H}$-modules, the ranks satisfy
   \begin{equation*}
  \rnk_{\Iwa{H}}(L_2) = \rnk_{\Iwa{H}}(L_1) + \rnk_{\Iwa{H}}(L_3).
  \end{equation*}
\end{lemma}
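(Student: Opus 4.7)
The plan is to reduce both assertions to standard facts about Ore localizations and dimensions over division rings. I would first fix an open normal uniform pro-$p$ subgroup $U \trianglelefteq_o H$ and use the properties already recorded in the text: $\Iwa{H}$ is a free $\Iwa{U}$-module of rank $[H:U]$, the ring $\Iwa{U}$ is a noetherian domain whose non-zero elements satisfy both Ore conditions, and the classical ring of fractions $D_U$ is the associated division algebra.

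For the normalization $\rnk_{\Iwa{H}}(\Iwa{H}) = 1$ the argument is immediate: the $\Iwa{U}$-module isomorphism $\Iwa{H} \cong \Iwa{U}^{[H:U]}$, tensored over $\Iwa{U}$ with $D_U$, yields $D_U \otimes_{\Iwa{U}} \Iwa{H} \cong D_U^{[H:U]}$, so the $D_U$-dimension equals $[H:U]$ and division by $[H:U]$ gives $1$.

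For additivity, I would apply the functor $D_U \otimes_{\Iwa{U}} -$ to the given short exact sequence $0 \to L_1 \to L_2 \to L_3 \to 0$. The crucial ingredient is that $D_U$ is flat as an $\Iwa{U}$-module, which is a standard consequence of the Ore condition (classical Ore localizations are always flat over the base ring). Each $L_i$ is finitely generated over $\Iwa{U}$ because $\Iwa{H}$ is free of finite rank over $\Iwa{U}$, so after applying $D_U \otimes_{\Iwa{U}} -$ we obtain a short exact sequence of finite-dimensional left $D_U$-modules. Since $D_U$ is a division ring, dimension is additive in short exact sequences, yielding
\begin{equation*}
  \dim_{D_U}(D_U \otimes_{\Iwa{U}} L_2) = \dim_{D_U}(D_U \otimes_{\Iwa{U}} L_1) + \dim_{D_U}(D_U \otimes_{\Iwa{U}} L_3).
\end{equation*}
Dividing both sides by $[H:U]$ gives the claim.

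The only subtle point is the flatness of Ore localizations; apart from citing a standard reference in non-commutative ring theory for this fact, the proof is a direct unwinding of the definition. I do not foresee any serious obstacle.
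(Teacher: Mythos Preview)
Your proposal is correct and follows exactly the approach the paper intends: the lemma is left as an exercise with the hint to use flatness of the Ore localization $D_U$ over $\Iwa{U}$, and you carry this out cleanly, including the normalization via $\Iwa{H} \cong \Iwa{U}^{[H:U]}$ and the observation that the $L_i$ remain finitely generated over $\Iwa{U}$. There is nothing to add.
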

\begin{exercise}
 Give a proof of Lemma \ref{lem:rank-is-additive}.\\
 (Hint: use the fact that the Ore localization $D_U$ of the Iwasawa algebra $\Iwa{U}$ is a flat $\Iwa{U}$-module, when $U\trianglelefteq_o H$ is an open normal uniform pro-$p$ subgroup.)
\end{exercise}

\subsubsection{The theory of $p$-adic analytic towers}
\begin{definition}
Let $G$ be a group and let $(G_i)_{i \in \bbN}$ be a finite index normal tower.
The tower is said to be \emph{$p$-adic analytic}, if there is an injective homomorphism $\varphi\colon G \to \GL_N(\bbZ_p)$,
such that $G_i = \ker(\red_{p^i} \circ \varphi)$. The closure $H$ of the image $\varphi(G)$ of $\varphi$ is a compact $p$-adic Lie group; its dimension $d$ is called the \emph{dimension} of the tower.
\end{definition}

\begin{remark}
Let $(G_i)_i$ be a $p$-adic analytic tower in $G$. The dimension $d$ and the associated compact $p$-adic Lie group $H$ are independent of the 
homomorphism $\varphi\colon G \to \GL_N(\bbZ_p)$.
\end{remark}

 Let $G$ be a group and let $(G_i)_{i\in\bbN}$ be a $p$-adic analytic tower with associated $p$-adic Lie group $H$ of dimension $d$.
 Let $X$ be a free $G$-CW-complex such that $G\backslash X$ is of finite type.
 
 \begin{definition}
   The \emph{$q$-th mod-$p$ $L^2$-Betti number} of $X$ with respect to the tower $(G_i)_{i\in\bbN}$
   is
   \begin{equation*}
     \beta_q(X,G;\bbF_p) = \rnk_{\Iwa{H}}\Bigl(H_q\bigl(\Iwa{H}\otimes_{\bbF_p[G]} C_\bullet(X,\bbF_p)\bigr)\Bigr).
   \end{equation*}
 \end{definition}

\begin{theorem}[Bergeron-Linnell-L\"uck-Sauer \cite{BLLS2014}]\label{thm:mod-p-approximation}
  Let $G$ be a group and let $(G_i)_{i\in\bbN}$ be a $p$-adic analytic tower of dimension $d$.
  Let $X$ be a free $G$-CW-complex such that $G\backslash X$ is of finite type, then for every $q\in \bbN_0$
  \begin{equation*}
       b_q(G_i\backslash X, \bbF_p) = \beta_q(X,G,\bbF_p) \: [G:G_i] + O\bigl([G:G_i]^{1-\frac{1}{d}}\bigr).
  \end{equation*}
\end{theorem}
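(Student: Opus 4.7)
The plan is to follow the strategy of Bergeron--Linnell--L\"uck--Sauer (building on Calegari--Emerton), organizing everything around the Iwasawa algebra $\Lambda = \Iwa{H}$ and a single key growth estimate for $\operatorname{Tor}$-dimensions. First I would set up the algebra: the inclusion $\bbF_p[G]\hookrightarrow \Lambda$ induced by $\varphi$ promotes the mod-$p$ cellular chain complex of $X$ to a bounded complex
\begin{equation*}
  C_\bullet^\Lambda := \Lambda \otimes_{\bbF_p[G]} C_\bullet(X,\bbF_p)
\end{equation*}
of finitely generated free $\Lambda$-modules. Its homology modules $H_q^\Lambda$ are finitely generated over the noetherian ring $\Lambda$, and by the very definition of $\beta_q(X,G;\bbF_p)$ one has $\rnk_\Lambda H_q^\Lambda = \beta_q(X,G;\bbF_p)$. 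Writing $U_i := \overline{\varphi(G_i)} \leq H$ and $J_i := \ker(\Lambda \to \bbF_p[H/U_i])$, the canonical identification $\Lambda/J_i \cong \bbF_p[G/G_i]$ gives $C_\bullet(G_i \backslash X, \bbF_p) \cong \Lambda/J_i \otimes_\Lambda C_\bullet^\Lambda$. By dropping finitely many initial terms of the tower I may further assume each $U_i$ is a uniform pro-$p$ group of dimension $d$; this only affects the implicit constant in the error.

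The heart of the argument will be the following growth estimate: for every finitely generated $\Lambda$-module $M$ and every $s\geq 0$,
\begin{equation*}
  \dim_{\bbF_p} \operatorname{Tor}_s^\Lambda(\Lambda/J_i,\, M) \;=\; \delta_{s,0}\cdot \rnk_\Lambda(M)\cdot [G:G_i] \;+\; O\bigl([G:G_i]^{1-1/d}\bigr).
\end{equation*}
Because $\Iwa{U_1}$ is an Ore domain with division ring of fractions $D_{U_1}$, every finitely generated $M$ fits in a short exact sequence $0 \to \Lambda^{r} \to M \to T \to 0$ with $r = \rnk_\Lambda M$ and $T$ torsion, so Lemma \ref{lem:rank-is-additive} combined with the long exact $\operatorname{Tor}$-sequence reduces the estimate to the two cases $M = \Lambda$ (exact, since $\dim_{\bbF_p} \Lambda/J_i = [G:G_i]$ and the higher $\operatorname{Tor}$'s vanish) and $M$ finitely generated torsion. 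The torsion case is the genuine obstacle; I would attack it by restricting scalars to the uniform pro-$p$ group $U_1$, filtering $\Iwa{U_1}$ by powers of its augmentation ideal, and using the classical fact that the associated graded ring is the polynomial algebra $\bbF_p[y_1,\dots,y_d]$. Hilbert--Samuel theory in $d$ variables yields the power-saving bound $O([U_1:U_i]^{1-1/d})$ for $\dim_{\bbF_p}$ of a torsion module modulo a high enough power of the augmentation ideal, and a spectral-sequence argument exploiting the finite global dimension of $\Iwa{U_1}$ propagates the bound to all higher $\operatorname{Tor}_s$.

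With the growth estimate in hand, I would feed it into the hyperhomology spectral sequence
\begin{equation*}
  E^2_{s,t} \;=\; \operatorname{Tor}_s^\Lambda\bigl(\Lambda/J_i,\, H_t^\Lambda\bigr) \;\Longrightarrow\; H_{s+t}(G_i\backslash X,\, \bbF_p),
\end{equation*}
which converges because $C_\bullet^\Lambda$ is a bounded complex of flat $\Lambda$-modules. Only the entry $E^2_{0,q}$ contributes a leading term $\beta_q(X,G;\bbF_p)\cdot [G:G_i]$, while each of the finitely many remaining entries on the diagonal $s+t=q$ has size $O([G:G_i]^{1-1/d})$. Since $b_q(G_i\backslash X,\bbF_p) = \sum_s \dim_{\bbF_p} E^\infty_{s,q-s}$ and every $E^\infty_{s,q-s}$ is a subquotient of $E^2_{s,q-s}$, the stated asymptotic follows at once. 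The single main obstacle throughout is the torsion-module estimate of the second paragraph: it is effectively Harris's theorem on the growth of torsion Iwasawa modules and requires genuinely new commutative-algebraic input beyond the formal spectral-sequence machinery that surrounds it.
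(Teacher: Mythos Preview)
Your proposal is correct and rests on the same core input as the paper (Harris's theorem on the growth of $\bbF_p\otimes_{\Iwa{H_i}}M$ for finitely generated $\Iwa{H}$-modules), but the packaging is genuinely different. The paper avoids spectral sequences and higher $\operatorname{Tor}$'s entirely: it applies Harris's theorem only in degree zero, first to $\coker(\widehat{\partial})$ (which commutes with $\bbF_p\otimes_{\Iwa{H_i}}-$ by right exactness), then deduces the analogous asymptotics for $\im(\widehat{\partial}^{[i]})$ and $\ker(\widehat{\partial}^{[i]})$ by subtracting inside the short exact sequences $0\to\im\to M\to\coker\to 0$ and $0\to\ker\to L\to\im\to 0$ (this is the paper's Corollary~\ref{cor:dim-formulae}). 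The Betti number is then $\dim\ker(\widehat{\partial}_q^{[i]})-\dim\im(\widehat{\partial}_{q+1}^{[i]})$ and the result drops out by rank additivity. Your route instead establishes the full estimate $\dim_{\bbF_p}\operatorname{Tor}_s^\Lambda(\Lambda/J_i,M)=\delta_{s,0}\rnk_\Lambda(M)[G:G_i]+O([G:G_i]^{1-1/d})$ and feeds it through the hyperhomology spectral sequence; this is closer in spirit to Calegari--Emerton and buys a more structural statement (control of all $\operatorname{Tor}$'s, not just kernels and images), at the cost of more machinery and a couple of extra technical points to clean up (e.g.\ your short exact sequence $0\to\Lambda^r\to M\to T\to 0$ really wants $\Lambda$ replaced by the Ore domain $\Iwa{U_1}$, since $\Iwa{H}$ need not be a domain when $H$ has torsion). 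Both arguments are complete once Harris's theorem is granted; the paper's is shorter and more elementary, yours is more conceptual.
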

\begin{remark}
 In this setting, there is rational number $w$ such that the index $[G:G_i]$ is equal to
 \begin{equation*}
   [G:G_i] = w p^{di}
 \end{equation*}
  for all sufficiently large $i\in\bbN$. In particular, $O\bigl([G:G_i]^{1-\frac{1}{d}}\bigr) = O(p^{i(d-1)})$ as $i$ tends to infinity.
\end{remark}

The key ingredient is the following result; a proof can be found in \cite{BLLS2014}.
\begin{theorem}[Harris]
Let $H \leq_c \GL_N(\bbZ_p)$ be a compact $p$-adic Lie group of dimension~$d$ and let $H_i = H \cap \ker(\red_{p^i})$.
For every finitely generated $\Iwa{H}$-module~$M$, the identity
\begin{equation*}
   \dim_{\bbF_p} (\bbF_p \otimes_{\Iwa{H_i}} M) = [H:H_i] \: \rnk_{\Iwa{H}}(M) + O\bigl([H:H_i]^{1-\frac{1}{d}}\bigr)
\end{equation*}
holds as $i \to \infty$.
\end{theorem}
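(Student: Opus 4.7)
The plan is to reduce to the case where $H$ is uniform pro-$p$ and then apply Hilbert-Samuel theory for the Iwasawa algebra, using Lazard's identification of its associated graded ring.

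First I would reduce to the uniform case. Pick an open normal uniform pro-$p$ subgroup $U \trianglelefteq H$. For $i$ sufficiently large $H_i \subseteq U$, so $H_i = U \cap H_i =: U_i$. Since $\Iwa{H}$ is free of rank $[H:U]$ over $\Iwa{U}$, the rank of any finitely generated $\Iwa{H}$-module $M$ satisfies $\rnk_{\Iwa{U}}(M) = [H:U]\cdot\rnk_{\Iwa{H}}(M)$, and $[U:U_i] = [H:H_i]/[H:U]$. Thus the asymptotic formula for $U$ implies the formula for $H$, the factor $[H:U]$ being absorbed into the implicit $O$-constant.

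Second, for $U$ uniform pro-$p$ of dimension $d$, Lazard's structure theorem identifies the associated graded ring of $\Iwa{U}$ with respect to its augmentation ideal $J$ with a polynomial algebra $\bbF_p[T_1,\dots,T_d]$. Hence standard Hilbert-Samuel theory applies: for every finitely generated $\Iwa{U}$-module $M$, the function $k \mapsto \dim_{\bbF_p}(M/J^k M)$ is a polynomial in $k$ of degree equal to the Gelfand-Kirillov dimension $\delta(M) \leq d$, with leading coefficient proportional to $\rnk_{\Iwa{U}}(M)$ when $\delta(M) = d$. The augmentation ideals $\omega_i$ of $\Iwa{U_i}$ inside $\Iwa{U}$ are commensurable with suitable powers of $J$, and one has $\dim_{\bbF_p}(\Iwa{U}/\omega_i \Iwa{U}) = [U:U_i]$. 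Translating the Hilbert-Samuel asymptotic through this comparison of filtrations yields
\begin{equation*}
  \dim_{\bbF_p}(M/\omega_i M) \;=\; \rnk_{\Iwa{U}}(M)\cdot [U:U_i] \,+\, O\bigl([U:U_i]^{\delta(M)/d}\bigr).
\end{equation*}

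The main obstacle is the torsion case: one must show that every finitely generated torsion $\Iwa{U}$-module $T$ has $\delta(T) \leq d-1$, so that $\dim_{\bbF_p}(T/\omega_i T) = O([U:U_i]^{1-1/d})$. This rests on the fact, due to Venjakob, that $\Iwa{U}$ is an Auslander-regular ring of global dimension $d$; establishing the precise comparison between the $J$-adic and $\omega_i$-adic filtrations is the technical heart of the argument. To close out, given an arbitrary finitely generated $\Iwa{U}$-module $M$ of rank $r$, I would combine the short exact sequence $0 \to T(M) \to M \to M/T(M) \to 0$ with an embedding $0 \to M/T(M) \to \Iwa{U}^r \to C \to 0$ obtained by Ore localization (so that $C$ is torsion). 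Additivity of $\dim_{\bbF_p}(\bbF_p \otimes_{\Iwa{U_i}} -)$ holds up to higher $\mathrm{Tor}$-terms, which are themselves finitely generated torsion modules and so absorbed into the error, combining the two sequences to yield the desired asymptotic for $M$.
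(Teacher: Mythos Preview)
The paper does not supply its own proof of Harris' Theorem: immediately before the statement it writes ``a proof can be found in \cite{BLLS2014}'', and the result is then used as a black box to derive the subsequent corollary and Theorem~\ref{thm:mod-p-approximation}. There is therefore no proof in the paper to compare your proposal against.

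For what it is worth, your sketch follows the route taken in the literature (Harris' original argument and the account in Bergeron--Linnell--L\"uck--Sauer): reduce to an open uniform pro-$p$ subgroup $U$, invoke Lazard's identification of the associated graded of $\Iwa{U}$ with a polynomial ring $\bbF_p[T_1,\dots,T_d]$, and apply Hilbert--Samuel/good-filtration theory to control $\dim_{\bbF_p}(M/\omega_i M)$, the torsion part contributing only to the error term because its dimension is at most $d-1$. One small slip in your write-up: the displayed error $O\bigl([U:U_i]^{\delta(M)/d}\bigr)$ is the right bound only for the torsion piece; for a module with $\delta(M)=d$ that exponent equals $1$ and the ``error'' would swallow the main term. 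The correct uniform statement is that the lower-degree terms of the Hilbert polynomial contribute $O\bigl([U:U_i]^{1-1/d}\bigr)$ in every case, which is in fact what you use in the final paragraph.
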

\begin{corollary}\label{cor:dim-formulae}
 Let $f\colon L \to M$ be a homomorphism of free finite rank left $\Iwa{H}$-modules.
 For every $i\in\bbN$ we define $f^{[i]} := \id \otimes f \colon \bbF_p \otimes_{\Iwa{H_i}} L \to \bbF_p \otimes_{\Iwa{H_i}} M$.
 The following statements hold as $i\to\infty$:
 \begin{enumerate}
  \item\label{it:coker-formula} $\dim_{\bbF_p}\bigl(\coker(f^{[i]})\bigr) = \rnk_{\Iwa{H}}\bigl(\coker(f)\bigr) [H:H_i] + O\bigl([H:H_i]^{1-\frac{1}{d}}\bigr)$
  \item\label{it:im-formula} $\dim_{\bbF_p}\bigl(\im(f^{[i]})\bigr) = \rnk_{\Iwa{H}}\bigl(\im(f)\bigr) [H:H_i] + O\bigl([H:H_i]^{1-\frac{1}{d}}\bigr)$
  \item\label{it:ker-formula} $\dim_{\bbF_p}\bigl(\ker(f^{[i]})\bigr) = \rnk_{\Iwa{H}}\bigl(\ker(f)\bigr) [H:H_i] + O\bigl([H:H_i]^{1-\frac{1}{d}}\bigr)$
 \end{enumerate}
\end{corollary}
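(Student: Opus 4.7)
The plan is to deduce all three asymptotics from Harris's theorem applied to the finitely generated $\Iwa{H}$-modules $L$, $M$, and $\coker(f)$, combined with the additivity of rank in short exact sequences (Lemma \ref{lem:rank-is-additive}). The key subtlety is that the functor $\bbF_p \otimes_{\Iwa{H_i}} -$ is right exact but not left exact, so I would first establish the cokernel formula, then extract the image formula by subtracting dimensions inside $\bbF_p \otimes_{\Iwa{H_i}} M$, and finally the kernel formula by subtracting inside $\bbF_p \otimes_{\Iwa{H_i}} L$.

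By right exactness of $\bbF_p \otimes_{\Iwa{H_i}} -$, the exact sequence $L \stackrel{f}{\to} M \to \coker(f) \to 0$ yields a canonical isomorphism $\coker(f^{[i]}) \cong \bbF_p \otimes_{\Iwa{H_i}} \coker(f)$. Since $\coker(f)$ is a quotient of the finitely generated module $M$ it is itself finitely generated, so Harris's theorem applies and gives (\ref{it:coker-formula}) immediately. For (\ref{it:im-formula}), the short exact sequence $0 \to \im(f^{[i]}) \to \bbF_p \otimes_{\Iwa{H_i}} M \to \coker(f^{[i]}) \to 0$ gives
\begin{equation*}
\dim_{\bbF_p}\im(f^{[i]}) = \dim_{\bbF_p}\bigl(\bbF_p \otimes_{\Iwa{H_i}} M\bigr) - \dim_{\bbF_p}\coker(f^{[i]}).
\end{equation*}
Harris's theorem handles the first term (since $M$ is free of finite rank, hence finitely generated), and (\ref{it:coker-formula}) handles the second. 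The constant terms combine via additivity of rank in $0 \to \im(f) \to M \to \coker(f) \to 0$ to yield $\rnk_{\Iwa{H}}(\im(f))$, and the error terms remain $O\bigl([H:H_i]^{1-\frac{1}{d}}\bigr)$ under subtraction.

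Part (\ref{it:ker-formula}) is then obtained by the entirely analogous computation using the short exact sequence $0 \to \ker(f^{[i]}) \to \bbF_p \otimes_{\Iwa{H_i}} L \to \im(f^{[i]}) \to 0$, combining Harris on $L$ with (\ref{it:im-formula}) and invoking additivity of rank in $0 \to \ker(f) \to L \to \im(f) \to 0$ to identify the constant as $\rnk_{\Iwa{H}}(L) - \rnk_{\Iwa{H}}(\im(f)) = \rnk_{\Iwa{H}}(\ker(f))$. I do not expect any serious obstacle; the only conceptual point is that Harris's theorem cannot be applied directly to $\ker(f)$ or $\im(f)$ to conclude anything about the dimensions of $\ker(f^{[i]})$ and $\im(f^{[i]})$, because kernels and images do not commute with reduction along $\Iwa{H} \to \Iwa{H_i}$. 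One is forced to start at the cokernel, where right exactness supplies the correct identification, and then work backwards by two successive subtractions.
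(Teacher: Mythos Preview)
Your proposal is correct and follows essentially the same route as the paper: right exactness gives $\coker(f^{[i]})\cong \bbF_p\otimes_{\Iwa{H_i}}\coker(f)$ so Harris yields \eqref{it:coker-formula}, then two successive subtractions inside $\bbF_p\otimes_{\Iwa{H_i}}M$ and $\bbF_p\otimes_{\Iwa{H_i}}L$ give \eqref{it:im-formula} and \eqref{it:ker-formula}, with Lemma~\ref{lem:rank-is-additive} identifying the leading coefficients. The only cosmetic difference is that the paper computes $\dim_{\bbF_p}\bigl(\bbF_p\otimes_{\Iwa{H_i}}M\bigr)=m[H:H_i]$ exactly from freeness rather than invoking Harris on $M$, and it leaves \eqref{it:ker-formula} as an exercise whose intended solution is precisely your argument.
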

\begin{proof}
 The tensor product functors $\bbF_p \otimes_{\Iwa{H_i}} \cdot$ are right exact, this means, they preserve cokernels.
 Therefore $\coker(f^{[i]}) = \bbF_p \otimes_{\Iwa{H_i}} \coker(f)$ and assertion \eqref{it:coker-formula} follows from Harris' theorem.
 Now, we deduce \eqref{it:im-formula}.
 We have short exact sequences
 \begin{align*}
   0 \longrightarrow \:\im(f)\: \longrightarrow \:&M\: \longrightarrow \:\coker(f)\: \longrightarrow 0\\
   0 \longrightarrow \im(f^{[i]}) \longrightarrow \bbF_p &\otimes_{\Iwa{H_i}} M \longrightarrow \coker(f^{[i]}) \longrightarrow 0\\
 \end{align*}
 for all $i\in \bbN$. If $m \in \bbN$ is the rank of the free module $M$, then
 $m = \rnk_{\Iwa{H}}(M)$ and $m[H:H_i] = \dim_{\bbF_p} \bbF_p \otimes_{\Iwa{H_i}} M$.
 Finally, \eqref{it:im-formula} follows from the following calculation
 \begin{align*}
  \dim_{\bbF_p}\bigl(\im(f^{[i]})\bigr) &= m[H:H_i] - \dim_{\bbF_p}\bigl(\coker(f^{[i]})\bigr) \\
                                    &= \bigl(m- \rnk_{\Iwa{H}}\bigl(\coker(f)\bigr)\bigr)[H:H_i] + O\bigl([H:H_i]^{1-\frac{1}{d}}\bigr)\\
                                    &= \rnk_{\Iwa{H}}\bigl(\im(f)\bigr) [H:H_i] + O\bigl([H:H_i]^{1-\frac{1}{d}}\bigr).
 \end{align*}
 We leave it as an exercise to verify the last assertion.
\end{proof}
\begin{exercise}
  Prove statement \eqref{it:ker-formula} of Corollary \ref{cor:dim-formulae}.
\end{exercise}

\begin{proof}[Proof of Theorem \ref{thm:mod-p-approximation}]
We first note that $[G:G_i] = [H:H_i]$ since the image of $G$ is dense in $H$.
The cellular chain complex
$C_\bullet(X,\bbF_p)$ is a chain complex of finite rank free $\bbF_p[G]$-modules, thus
the chain complex $\widehat{C}_\bullet = \Iwa{H} \otimes_{\bbF_p[G]} C_\bullet$
\begin{equation*}
  \widehat{C}_\bullet\colon \quad 
 \dots \widehat{C}_{q+1} \stackrel{\widehat{\partial}_{q+1}}{\longrightarrow} \widehat{C}_{q} \stackrel{\widehat{\partial}_{q}}{\longrightarrow}  \widehat{C}_{q-1} \longrightarrow \dots
\end{equation*}
is a chain complex of finite rank free $\Iwa{H}$-modules.
By Lemma \ref{lem:rank-is-additive} the $q$-th mod-$p$ $L^2$-Betti number is 
$\beta_q(X,G;\bbF_p) = \rnk_{\Iwa{H}}(\ker(\widehat{\partial}_{q})) -  \rnk_{\Iwa{H}}(\im(\widehat{\partial}_{q+1}))$.
Moreover, we observe that the cellular chain complex $C_\bullet(G_i\backslash X, \bbF_p)$ of $G_i\backslash X$
is isomorphic to the tensor product $\bbF_p \otimes_{\Iwa{H_i}} \widehat{C}_\bullet$.
Finally, Corollary \ref{cor:dim-formulae} yields
\begin{align*}
  b_q(G_i\backslash X,\bbF_p) &= \dim_{\bbF_p} \bigl( \ker(\widehat{\partial}^{[i]}_q) \bigr) - \dim_{\bbF_p}(\widehat{\partial}^{[i]}_{q+1}) \\
                              &= [G:G_i] \: \bigl( \rnk_{\Iwa{H}}(\ker(\widehat{\partial}_{q})) - \rnk_{\Iwa{H}}(\im(\widehat{\partial}_{q+1})) \bigr) + O([G:G_i]^{1-\frac{1}{d}})\\
                              &= [G:G_i] \:\beta_q(X,G,\bbF_p) + O([G:G_i]^{1-\frac{1}{d}}).
\end{align*}
\end{proof}

\subsection{ The first Betti numbers of groups and the rank gradient }

Let $G$ be a finitely generated group. We take a closer look at the first $L^2$-Betti number of $G$.

The first homology group of $G$ is isomorphic to the abelianization:
\begin{equation*}
 H_1(G;\bbZ) \cong G^{ab} = G/[G,G].
\end{equation*}
The first Betti number of $G$ is the rank of the free part of this finitely generated abelian group.
The first mod-$p$ homology of $G$ is $H_1(G,\bbF_p) \cong G^{ab}/pG^{ab}$ and the first $\bbF_p$-Betti number is
the dimension of this $\bbF_p$-vector space. In particular, we obtain
\begin{equation}\label{eq:inequalities}
    b_1(G;\bbQ) \leq b_1(G,\bbF_p) \leq d(G),
\end{equation}
where $d(G)$ denotes the minimal number of generators of $G$.

 Given a subgroup of finite index $H \leq_{f.i.} G$, the Nielsen-Schreier formula for free groups implies that $d(H)-1 \leq [G:H](d(G)-1)$.
 In particular, for every finite index normal tower $(G_i)_{i \in \bbN}$, the sequence of non-negative numbers $\frac{d(G_i)-1}{[G:G_i]}$ is monotonically decreasing;
 therefore the limit 
 \begin{equation*}
    \RG(G,(G_i)) := \lim_{i\to\infty} \frac{d(G_i)-1}{[G:G_i]}
 \end{equation*}
 exists. The number $\RG(G,(G_i))$ is called the \emph{rank gradient} of $G$ w.r.t.\ $(G_i)_{i\in\bbN}$. The rank gradient was introduced by Lackenby in \cite{Lackenby05}.
 Even though it has been studied frequently in recent years, the following question remains open.
 
 \begin{question}\label{qu:rank-gradient-tower}
  Let $G$ be a finitely generated, residually finite group. Is $\RG(G,(G_i))$ independent of the chosen finite index normal tower?
 \end{question}

  Assume now that $G$ is an infinite, finitely presented, residually finite group.
  In this case L\"uck's approximation theorem and \eqref{eq:inequalities} imply the following inequalities
  \begin{equation*}
     b_1^{(2)}(G) \leq \liminf_{i \to \infty} \frac{b_1(G_i,\bbF_p)}{[G:G_i]} \leq \limsup_{i \to \infty} \frac{b_1(G_i,\bbF_p)}{[G:G_i]} \leq \RG(G,(G_i)).
  \end{equation*}
 It is thus tempting to ask the following question.
 \begin{question}[cf.\ Question 3.3 \cite{LuckSurvey2016}]\label{qu:L2betti-rank-gradient}
  Let $G$ be an infinite, residually finite, finitely presented group.
  Is it true that $\RG(G,(G_i)) = b^{(2)}_1(G)$ holds for every finite index normal tower $(G_i)_{i\in\bbN}$ in $G$?
 \end{question}
 In particular, an affirmative answer immediately provides answers to Question \ref{qu:rank-gradient-tower},
 Question \ref{qu:convergence-mod-p} and Conjecture \ref{conj:mod-p-approx} for $b_1$.
 Clearly, the answer to Question \ref{qu:L2betti-rank-gradient} is positive for all groups with vanishing rank gradient.
 For example, this holds for all finitely generated groups which contain an infinite amenable normal subgroup; see~\cite[Theorem 3]{AbertNikolov2012}.

\providecommand{\bysame}{\leavevmode\hbox to3em{\hrulefill}\thinspace}
\providecommand{\href}[2]{#2}


\begin{thebibliography}{10}
\bibitem{AbertNikolov2012} M.\ Ab\'ert and N.\ Nikolov,
\emph{Rank gradient, cost of groups and the rank versus Heegaard genus problem}, 
J.\ Eur.\ Math.\ Soc.\ (JEMS) \textbf{14} (2012), 1657--1677. 

%

\bibitem{BLLS2014} N.\ Bergeron, P.\ Linnell, W.\ L\"uck, R.\ Sauer,
\emph{On the growth of Betti numbers in $p$-adic analytic towers},
Groups Geom.\ Dyn.\ \textbf{8} (2014), 311--329. 

\bibitem{CalegariEmerton09} F.\ Calegari, M.\ Emerton, \emph{Bounds for multiplicities of unitary representations of cohomological type in spaces of cusp forms}
Ann.\ of Math.\ (2) \textbf{170} (2009), 1437--1446.

\bibitem{DDMS}
J.\ D.\ Dixon, M.\ du Sautoy, A.\ Mann, D.\ Segal,
\emph{Analytic pro-$p$ groups},
2nd ed., Cambridge University Press, Cambridge, 1999.

 \bibitem{ElekSzabo2006}
 G.\ Elek, E.\ Szab\'o, \emph{On sofic groups},
 J.\ Group Theory \textbf{9} (2006), 161--171.
%
 \bibitem{ElekSzabo2005}
  G.\ Elek, E.\ Szab\'o, \emph{Hyperlinearity, essentially free actions and $L^2$-invariants. The sofic property.},
  Math.\ Ann.\ \textbf{332} (2005), 421--441.

 \bibitem{Farber1998}
 M.\ Farber, \emph{Geometry of growth: approximation theorems for $L^2$ invariants},
  Math.\ Ann.\ \textbf{311} (1998), 335--375.

%

%
 \bibitem{Gromov1999}
 M.\ Gromov, \emph{Endomorphisms of symbolic algebraic varieties},
 J.\ Eur.\ Math.\ Soc.\ (JEMS) \textbf{1} (1999), no.~2, 109--197.
 
 \bibitem{Jaikin2017} A.\ Jaikin-Zapirain, \emph{The base change in the Atiyah and the L\"uck approximation conjectures}, preprint 2017.
 
 \bibitem{HolgerL2} H.\ Kammeyer, \emph{Introduction to $\ell^2$-Invariants},
 course notes available online at \url{https://topology.math.kit.edu/21_679.php}.
 
 \bibitem{Lackenby05}
  M.\ Lackenby, \emph{Expanders, rank and graphs of groups}, 
   Israel J.\ Math.\ \textbf{146} (2005), 357--370. 
   
 \bibitem{LLS2011} P.\ Linnell, W.\ L\"uck, R.\ Sauer,
\emph{The limit of $\bbF_p$-Betti numbers of a tower of finite covers with amenable fundamental groups}, 
Proc.\ Amer.\ Math.\ Soc.\ \textbf{139} (2011), 421--434. 
 
 \bibitem{Luck1994}
 W.\ L\"uck,
  \emph{Approximating $L^2$-invariants by their finite-dimensional analogues}, Geom.\ Funct.\ Anal.\ \textbf{4} (1994), no.\ 4, 455--481.
%
 \bibitem{LuckBook}
 W.\ L\"uck, \emph{$L^2$-Invariants: Theory and Applications to Geometry and $K$-Theory}, Ergebnisse der Mathematik und ihrer Grenzgebiete (vol.\ 44), Springer-Verlag, Berlin, 2002.
 
 \bibitem{LuckSurvey2016}
W.\ L\"uck, \emph{Approximating $L^2$-invariants by their classical counterparts},
  EMS Surv.\ Math.\ Sci.\ \textbf{3} (2016), no.~2, 269--344. 
 
 \bibitem{LuckErshov2014} W.\ L\"uck and M.\ Ershov, \emph{The first $L^2$-Betti number and approximation in arbitrary characteristic},
Doc.\ Math.\ \textbf{19} (2014), 313--332. 
 
 \bibitem{LuckOsin2011} W.\ L\"uck and D.\ Osin, \emph{Approximating the first $L^2$-Betti number of residually finite groups},
 J.\ Topol.\ Anal.\ \textbf{3} (2011), 153--160. 
 
 \bibitem{Schick01}
 T.\ Schick, \emph{$L^2$-determinant class and approximation of $L^2$-Betti numbers},
     Trans.\ Amer.\ Math.\ Soc.\ \textbf{353} (2001), 3247--3265. 

%
%
%
%
%
%

\bibitem{Weiss2000} B.\ Weiss, \emph{Sofic groups and dynamical systems},
Ergodic theory and harmonic analysis,
Sankhya Ser.\ A \textbf{62} (2000), 350--359. 

\end{thebibliography}
\end{document}